\definecolor{newcolor}{rgb}{.8,.349,.1}
\theoremstyle{definition}
\newtheorem{thm}{Theorem}[section]
\newtheorem{thm*}{Theorem}[section]
\newtheorem{mydef}{Definition}[section]
\newtheorem{rmk}{Remark}
\newtheorem{ex}{Example}
\newcommand{\zl}{z^{[\ell]}}
\newcommand{\lam[1]}{\lambda^{[#1]}}
\newcommand{\yy}{\mathbf{y}}
\newcommand{\yyy[1]}{\mathbf{y}^{[#1]}}
\newcommand{\YY}{\mathbf{Y}}
\newcommand{\ts}{\tilde{s}}
\newcommand{\Dt}{\Delta{t}}
\newcommand{\Dx}{\Delta{x}}
\newcommand{\F}{\mathcal{F}}
\newcommand{\Fl[1]}{\mathbf{\mathcal{F}}^{[#1]}}
\newcommand{\cc[1]}{\mathbf{c}^{[#1]}}
\newcommand{\tcc[1]}{\tilde{\mathbf{c}}^{[#1]}}
\newcommand{\AAA[1]}{\mathbf{A}^{[#1]}}
\newcommand{\bb[1]}{\mathbf{b}^{[#1]}}
\newcommand{\tbb[1]}{\tilde{\mathbf{b}}^{[#1]}}
\newcommand{\tA[1]}{\tilde{\mathbf{A}}^{[#1]}}
\newcommand{\ee}{\mathbbm{1}}
\newcommand{\osgarkY}[3]{\mathbf{Y}^{[#2]}_{#1,#3}}
\newcommand{\bbS}[2]{\mathbb{S}_{#1}^{#2}}
\newcommand{\Nop}{N}
\newcommand{\pphi}[2]{\varphi^{[#1]}_{#2}}
\newcommand{\aalpha}{\bm{\alpha}}
\newcommand{\aaalpha}[2]{\alpha_{#1}^{[#2]}}
\newcommand{\Zero}{\bm{0}}
\def\OS22b{\mu}
   \title{Fractional-Step Runge--Kutta Methods: Representation and
  Linear Stability Analysis  \thanks{
  	This work was supported by the National Sciences and Engineering Research Council of Canada through its Discovery Grant program.}}
\author{ \href{}{\hspace{1mm}Raymond J. Spiteri}\\
	Department of Computer Science\\
	University of Saskatchewan, Saskatoon, SK, Canada\\
	\\
	\texttt{spiteri@cs.usask.ca} \\
	\And
	\href{}{\hspace{1mm}Siqi Wei}\\
	Department of Mathematics and Statistics \\ 
	University of Saskatchewan, Saskatoon, SK, Canada \\\\
	\texttt{siqi.wei@usask.ca} \\
}
\date{}
\begin{document}
\maketitle

\begin{abstract}
	Fractional-step methods are a popular and powerful
	divide-and-conquer approach for the numerical solution of
	differential equations. When the integrators of the fractional steps
	are Runge--Kutta methods, such methods can be written as generalized
	additive Runge--Kutta (GARK) methods, and thus the representation
	and analysis of such methods can be done through the GARK
	framework. We show how the general Butcher tableau representation
	and linear stability of such methods are related to the coefficients
	of the splitting method, the individual sub-integrators, and the
	order in which they are applied.  We use this framework to explain
	some observations in the literature about fractional-step methods
	such as the choice of sub-integrators, the order in which they are
	applied, and the role played by negative splitting coefficients in
	the stability of the method.
\end{abstract}


\keywords{operator-splitting, fractional-step methods, implicit-explicit
	methods, generalized-structure additive Runge--Kutta methods,
	linear stability analysis}

\section{Introduction}

The right-hand side of an explicit ordinary differential equation
(ODE) is often additively comprised of terms having different
character, e.g., linear vs.~nonlinear, stiff vs.~non-stiff, or derived
from different physical phenomena such as advection vs.~reaction
vs.~diffusion. In such cases, it is natural (and often advantageous)
to consider a splitting approach that treats the different terms with
different numerical methods. In this way, the terms can be treated in
specialized ways, potentially leading to efficient methods or ones
with special properties such as symplecticity or strong
stability. Indeed, when using different libraries as black boxes for
simulations of different parts of the system as in co-simulation (see,
e.g.,~\cite{Gomes2018-coSimulation} and references therein), there may
be no choice but to treat the parts separately. Similarly, it may not be
feasible to solve certain problems in a monolithic sense.


Divide-and-conquer approaches to solving ODEs date at least as far
back as Sophus Lie in the 1870s~\cite{Lie1888}. They have a long and
diverse history, and because of this, they have been known by many
names and have subtle differences between them. Such names include
operator splitting, time splitting, split-step methods, dimensional
splitting, locally one-dimensional (LOD) methods, alternating
direction implicit (ADI) methods, approximate matrix factorization
(AMF) methods, and additive methods (and their most popular special
case, implicit-explicit (IMEX) methods);~see,
e.g.,~\cite{Hundsdorfer2003,McLachlan2002,Glowinski2017} and
references therein. When used to solve differential-algebraic
equations, such as those arising from the incompressible
Navier--Stokes equations
, they are also called projection methods; see,
e.g,.~\cite{Guermond2006} and references therein.

Consider the initial-value problem (IVP) for an $\Nop$-addi\-tive\-ly
split ordinary differential equation
\begin{equation} \label{cauchy_problem}
\dv{\yy}{t} = \F(t,\yy) = \sum\limits_{\ell=1}^\Nop \Fl[\ell](t,\yy),
\qquad  \yy(0) = \yy_0.
\end{equation}

In this study, we focus on \textit{fractional-step methods}, as termed
by Yanenko~\cite{Yanenko1971}, whereby the various terms $\Fl[\ell]$
of the right-hand side of the ODE are integrated in turn. The output
from a given sub-integration is then used as input to the next one.  An
approximation to the solution $\yy(t)$ is eventually produced when all
the terms have been appropriately integrated.

Fractional-step methods are based on two fundamental parts: the
(operator) splitting method and the sub-integrators. The simplest and
most well-known examples of operator-splitting methods for ODEs
include the Lie--Trotter~\cite{Trotter1958} or
Godunov~\cite{Godunov1959} splitting method and the Strang--Marchuk
splitting method~\cite{Strang1968,Marchuk1971}. These are low-order
methods (first and second order, respectively). Symmetrized methods,
whereby one splitting method is applied in tandem with its adjoint,
are a popular approach for achieving higher-order splitting methods;
the Strang--Marchuk splitting method can be derived from the
Lie--Trotter/Godunov method in this fashion~\cite{McLachlan2002}.  The
sub-integrators can range anywhere from an exact sub-flow to a
standard numerical method such as linear multistep or Runge--Kutta.
The classical order of the overall fractional-step method is then
generally the minimum of the order of the splitting method and all the
sub-integrators.  The class of multi-rate methods uses sub-stepping,
perhaps in an adaptive fashion using a separate integrator library
like SUNDIALS or MATLAB's \texttt{ode15s}, to perform sub-integration
to within a specified error tolerance~\cite{Ropp2004}.

When Runge--Kutta methods are used as the sub-integrators for each
fractional step, the result is a fractional step Runge--Kutta (FSRK)
method that can be cast in the framework of a generalized-structure
additive Runge--Kutta (GARK) method. Such representations have
appeared to various degrees of generality,
e.g.,~\cite{Christlieb2015,Gonzalez-Pinto2021}. Here, we show how to
systematically construct the Butcher tableau representation of a
general FSRK method, i.e., one having an arbitrary Runge--Kutta method
as sub-integrator at each of $s$ splitting stages and $N$ operators.

Linear stability is an important property of a numerical method.  It
is generally an important indicator in the design and performance of a
numerical method in practice. In this paper, we use the Butcher
tableau representation of GARK methods to study the linear stability
of FSRK methods; we give an interpretation of the stability
function in terms of the splitting method coefficients and the
individual Runge--Kutta methods; and we show how the linear
stability theory presented to explain common observations in
published studies on the stability behavior of
fractional-step methods.

Before proceeding further, it should be noted that it is widely
accepted that no single numerical method is a silver bullet that will
outperform all other methods on all problems. Fractional-step methods
are no exception. Well-known issues with the use of splitting methods
in general include the specification of boundary
conditions~\cite{Hundsdorfer2003} as well as convergence to spurious
steady states~\cite{Speth2013,Glowinski2017}. A significant body of
literature exists to address these and other issues surrounding the
implementation of operator-splitting methods in practice, but a full
discussion is beyond the scope of this study.

The remainder of the paper is organized as follows. The necessary
definitions and theoretical background on operator-splitting, GARK,
and FSRK methods are given in \cref{sec:background}.  The main
theoretical results on the Butcher tableau representation and linear
stability of FSRK methods appear in \cref{sec:main}.  Some examples on
the use of these theoretical results are provided in
\cref{sec:numerical_ex}. The examples illustrate how observations in
the literature can be explained in the general framework set out in
this paper.  Conclusions follow in \cref{sec:conclusions}.

\section{Background}
\label{sec:background}

In this section, we present some necessary background to construct
FSRK methods, including the definition of operator-splitting methods,
additive Runge--Kutta (ARK) methods as introduced in \cite{Cooper1980},
and their evolution to GARK methods presented in \cite{sandu2015}.

\subsection{Operator-splitting methods}

We begin by presenting operator-split\-ting me\-thods as discussed in
\cite{hairer2006}. Let $\pphi{\ell}{\Dt}$ be the flow of the subsystem
$\displaystyle \dv{\yyy[\ell]}{t} = \Fl[\ell](t,\yyy[\ell])$ for
$\ell = 1,2,\dots,\Nop$. Compositions of $\pphi{\ell}{\Dt}$ for
$\ell=1,2\dots,\Nop$, such as
\begin{subequations}
\begin{align}
	& \Phi_{\Dt} :=  \pphi{\Nop}{\Dt} \circ \pphi{\Nop-1}{\Dt}\circ \cdots \circ \pphi{1}{\Dt}, \label{eq:os_godunov} \\ 
	& \Phi_{\Dt}^\ast :=  \pphi{1}{\Dt} \circ \pphi{2}{\Dt}\circ \cdots \circ \pphi{\Nop}{\Dt}  \label{eq:os_godunovadj}
\end{align}
\end{subequations}
are two numerical methods to solve \cref{cauchy_problem}. The two
methods \cref{eq:os_godunov} and \eqref{eq:os_godunovadj} are adjoints
of each other and are both first-order accurate. In particular,
\cref{eq:os_godunov} is known as the Lie--Trotter (or Godunov)
splitting method, although the same name could apply to
\eqref{eq:os_godunovadj} by a re-numbering of the operators. The
second-order Strang--Marchuk splitting method can be viewed a
composition of the Lie--Trotter method and its adjoint with halved
step sizes and can be written as
\begin{align*}
\Psi_{\Dt}^{S} & = \Phi_{\Dt/2}^\ast \circ \Phi_{\Dt/2} \\
& = \pphi{1}{\Dt/2} \circ \pphi{2}{\Dt/2}\circ \cdots
\circ  \pphi{\Nop-1}{\Dt/2}\circ \pphi{\Nop}{\Dt} \circ \pphi{\Nop-1}{\Dt/2}\circ \cdots \circ \pphi{1}{\Dt/2}.
\end{align*}

\begin{rmk}
We note that the term $\pphi{\Nop}{\Dt}$ arises from the group
property of exact flows. This term is often approximated
directly. However, it is possible to approximate the two occurrences
$\pphi{\Nop}{\Dt/2}$ separately, leading to a \emph{different
	numerical method (with different accuracy and stability
	properties)}.
\end{rmk}

The general form of the operator-splitting method considered in this
paper is expressed as follows. Let
$\aalpha = \{ \aalpha_1, \aalpha_2,\dots,\aalpha_s \}$, where
$\aalpha_k = \{\aaalpha{k}{1}, \aaalpha{k}{2},\dots,\aaalpha{k}{\Nop}\}$,
$k=1,2,\ldots,s$, be the coefficients of the splitting method. An
$s$-stage operator-splitting method that solves \eqref{cauchy_problem}
can be written as
\begin{equation} \label{eq:os_method}
\Psi_{\Dt} := \prod_{k=1}^{s} \Phi_{\aalpha_k \Dt}^{\{k\}} = \Phi_{\aalpha_s \Dt}^{\{s\}} \circ \Phi_{\aalpha_{s-1} \Dt}^{\{s-1\}} \circ \cdots \circ \Phi_{\aalpha_1 \Dt}^{\{1\}},  
\end{equation}
where
$\Phi_{\aalpha_k \Dt}^{\{k\}} := \pphi{\Nop}{\aaalpha{k}{\Nop}\Dt}
\circ\pphi{\Nop-1}{\aaalpha{k}{\Nop-1}\Dt} \circ \cdots \circ
\pphi{1}{\aaalpha{k}{1}\Dt} $. The operator-splitting method
\cref{eq:os_method} can be viewed as a general additive method but
where only specific coupling between the operators is allowed (see
below). Hence, the accuracy and stability properties can be expected
to be inferior to additive methods. Additive methods, however, are not
applicable for co-simulations where the simulations of subsystems must
be treated as black boxes and data between subsystems can only be
exchanged after a subsystem is integrated. Hence, the study of
operator-splitting methods of the form~\cref{eq:os_method} have a
broad range of application despite their rather specific nature.

\subsection{RK and ARK methods}

\begin{mydef}(Runge--Kutta method) \label{def:rk} Let $b_i$ and
$a_{ij}, i,j=1,2, \dots,\ts$, be real numbers, and let
$c_i=\sum\limits_{j=1}^{\ts}a_{ij}$. One step of an $\ts$-stage
Runge--Kutta method is given by
\begin{subequations}\label{eq:rk}
	\begin{align}
		\mathbf{y}_{n+1} & = \mathbf{y}_n +\Delta t\sum_{i=1}^{\ts}b_i\F(t_n+c_i\Delta t, \mathbf{Y}_i), \\ 
		\mathbf{Y}_i & =\mathbf{y}_n +\Delta t \sum_{j=1}^{\ts} a_{ij}\F(t_n+c_j\Delta t, \mathbf{Y}_j),  \enskip i=1,2, \dots,\ts.
	\end{align}
\end{subequations}
\end{mydef}

The coefficients $b_i$, $c_i$, and $a_{ij}$, $i,j=1,2, \dots,\ts$, of a
Runge--Kutta method can be represented as the Butcher tableau
\begin{equation*}
\begin{array}{c|cccc}
	{c}_1&a_{11} &a_{12} & \dots &a_{1\ts}\\
	{c}_2&a_{21} &a_{22} & \dots &a_{2\ts}\\
	\vdots &  \vdots & \vdots & \vdots & \vdots \\
	{c}_{\ts}&a_{\ts 1} &a_{\ts 2} & \dots& a_{\ts \ts}\\
	\hline
	& {b}_1 & {b}_2 &  \dots& {b}_{\ts}\\
\end{array} = 
\begin{array}{c|c}
	\mathbf{c}&\mathbf{A}\\
	\hline
	& \mathbf{b}\\
\end{array}.
\end{equation*}
For notational simplicity, we denote the quadrature weights of the
Butcher tableau by $\mathbf{b}$ rather than $\mathbf{b}^T$.

When different $\ts$-stage Runge--Kutta integrators are applied to
each operator $\Fl[\ell]$ of~\cref{cauchy_problem}, the numerical
method is called an additive Runge--Kutta method \cite{Cooper1980,
kennedy2003}.

\begin{mydef}[Additive Runge--Kutta method]\label{def:ark}
Let $b_i^{[\ell]}$, $a_{ij}^{[\ell]}$,
$i,j=1,2,\dots,\ts$, $\ell= 1,2,\dots, \Nop$, be real numbers, and let
$c_j^{[\ell]}=\sum\limits_{i=1}^{\ts}a_{ij}^{[\ell]}$. One step of an
$\ts$-stage ARK method is given by
\begin{subequations} 
	\begin{align*}
		\mathbf{y}_{n+1} & = \mathbf{y}_n +\Delta t\sum_{\ell=1}^{\Nop} \sum_{i=1}^{ \ts}b_i^{[\ell]}\Fl[\ell](t_n+c_i^{[\ell]}\Delta t, \mathbf{Y}_i),\\
		\mathbf{Y}_i & = \mathbf{y}_n +\Delta t \sum_{\ell=1}^{\Nop}\sum_{j=1}^{\ts} a_{ij}^{[\ell]}\Fl[\ell](t_n+c_j^{[\ell]}\Delta t, \mathbf{Y}_j),\enskip i=1,2, \dots,\ts ,
	\end{align*}
\end{subequations}
where $b_i^{[\ell]}$, $c_j^{[\ell]}$, and $a_{ij}^{[\ell]}$ are
the coefficients of the method applied to operator $\Fl[\ell]$.
\end{mydef}
The Butcher tableau for ARK methods can be written as \cite{sandu2015}
\begin{equation}\label{eq:arktab}
\begin{array}{c|c|c|c|c|c|c|c|}
	\mathbf{c}^{[1]}&\mathbf{c}^{[2]}&\cdots&\mathbf{c}^{[\Nop]}&\mathbf{A}^{[1]}&\mathbf{A}^{[2]}&\cdots &\mathbf{A}^{[\Nop]}\\
	\hline
	&&&&\mathbf{b}^{[1]}&\mathbf{b}^{[2]}&\cdots&\mathbf{b}^{[\Nop]}\\
\end{array} ,
\end{equation}
where $\mathbf{A}^{[\ell]}$, $\mathbf{b}^{[\ell]}$,
$\mathbf{c}^{[\ell]}, \enskip \ell=1,2,\dots, \Nop$, are the
coefficients of the Runge--Kutta method associated with operator
$\ell$.

\subsection{GARK methods}
In \cite{sandu2015}, ARK methods were expanded to the family of
generalized additive Runge--Kutta (GARK) methods. For the
purposes of the FSRK methods considered in this paper, we define
GARK methods as follows.

\begin{mydef}[Generalized Additive Runge--Kutta (GARK)
	method] \label{def:gark} Let $b_j^{[\ell]}$ and
	$a_{ij}^{[\ell',\ell]}, i=1, 2,\dots,\ts^{[\ell']},\ j=1,2,\dots,
	\ts^{[\ell]},$ $\ell', \ell= 1,2,\dots, \Nop$, be
	real numbers, and let
	$c_i^{[\ell',\ell]}=\sum\limits_{j=1}^{\ts^{[\ell]}}a_{ij}^{[\ell',\ell]}$. One
	step of a GARK method with an $\Nop$-additive splitting of the
	right-hand side of \eqref{cauchy_problem} with $\Nop$
	stages reads
	\begin{subequations}\label{eq:gark}
		\begin{align}
			\mathbf{y}_{n+1} & = \mathbf{y}_n +\Delta t\sum_{\ell=1}^N\sum_{i=1}^{\ts^{[\ell]}} b_i^{[\ell]}\Fl[\ell](t_n+c_i^{[\ell,\ell]}\Delta t, \mathbf{Y}^{[\ell]}_i),  \\
			\mathbf{Y}^{[\ell']}_i & = \mathbf{y}_n +\Delta t
			\sum\limits_{\ell=1}^{\Nop}
			\sum_{j=1}^{\ts^{[\ell]}}
			a_{ij}^{[\ell',\ell]}\Fl[\ell](t_n+c_j^{[\ell',\ell]}\Delta
			t, \mathbf{Y}^{[\ell]}_j),
			\enskip i=1,2,
			\dots,\ts^{[\ell']},\ \ell'=1,2,\dots,N. 
		\end{align}
	\end{subequations}
	The corresponding generalized Butcher tableau is
	\begin{equation}\label{gark_tab}
		\begin{array}{cccc|cccc}
			\cc[1,1]  & \cc[1,2]    &  \cdots  & \cc[1,N]     & 
			\AAA[1,1] & 	\AAA[1,2] & \cdots & 	\AAA[1,N] \\
			\cc[2,1]  & \cc[2,2]    &  \cdots  & \cc[2,N]       & 
			\AAA[2,1] & 	\AAA[2,2] & \cdots & 	\AAA[2,N] \\
			\vdots  & \vdots & \ddots & \vdots    & 
			\vdots  & \vdots & \ddots & \vdots \\
			\cc[\Nop,1]  & \cc[\Nop,2]    &  \cdots  & \cc[\Nop,N]       & 
			\AAA[\Nop,1] & 	\AAA[\Nop,2] & \cdots & 	\AAA[\Nop,N] \\
			\hline 
			&    &    &     & 
			\bb[1]  & \bb[2] & \cdots & \bb[N] \\
		\end{array}
	\end{equation}
\end{mydef}

\begin{rmk}
GARK methods generalize the structure of ARK methods in the sense
that different operators of the right-hand side of
\cref{cauchy_problem} can be integrated by Runge--Kutta methods with
different numbers of stages. The diagonal matrix $\AAA[\ell,\ell]$
corresponds to the Runge--Kutta method used to integrate 
operator $\ell$. The off-diagonal terms $\AAA[\ell',\ell]$,
$\ell' \neq \ell$, represent the coupling between operators within a
stage.

In \cite{sandu2015}, \cref{def:gark} is generalized in the
	following two ways. First, the number of rows in \cref{gark_tab},
	representing the number of GARK stages, is $\Nop'$, which can be
	less than the number of operators, $\Nop$.  Second, the
	$\YY_i^{[\ell]}$ in the argument of the operators $\Fl[\ell]$ in
	\cref{eq:gark} can be generalized to $\YY_i^{[J(\ell)]}$, where
	the mapping $J: \{1,2,\dots,\Nop\}\rightarrow \{1,2,\dots,\Nop'\}$
	from the operators to the GARK stages may not be the
	identity. Details of these generalizations are given in
	\cite{sandu2015} but are beyond the scope of the analysis
	presented here.  

\end{rmk}

\begin{mydef}[internal consistency of GARK
methods] \label{def:int_consis_gark} A GARK method \cref{eq:gark} is
called internally consistent \cite{sandu2015} if
\begin{equation} \label{eq:int_consis_gark}
	\sum\limits_{j=1}^{\ts^{[1]}} a_{ij}^{[\ell',1]} = \cdots =
	\sum\limits_{j=1}^{\ts^{[\Nop]}} a_{ij}^{[\ell',\Nop]} =
	c_{i}^{[\ell',\ell']} , \enskip i = 1, 2,\dots, \ts^{[\ell']},
	\enskip \ell' = 1,2,\dots,\Nop.
\end{equation}
\end{mydef}

\begin{rmk}
The internal consistency condition \cref{eq:int_consis_gark} ensures
that all intermediate stages are computed at the same internal
times. If a method is internally consistent, the matrix
$[\cc[i,j]]$, $i, j=1,2,\dots,N$, in \cref{gark_tab}
can be represented as a single column
$[\cc[1,1], \cc[2,2],\dots,\cc[N,\Nop]]^T$.
\end{rmk}

As described in \cite{sandu2015}, any GARK method can be written as an
ARK method (and vice versa)
; i.e., the Butcher tableau of a GARK method \eqref{gark_tab}
can be written in the form an ARK method \eqref{eq:arktab} but with
more stages, and the Butcher tableau of an ARK method
\eqref{eq:arktab} is a special case of that of a GARK method
\eqref{gark_tab} with one stage.

\subsection{FSRK methods}

When solving \eqref{cauchy_problem}, one can use operator-splitting
methods combined with suitable Runge--Kutta methods to integrate each
operator. We call this an FSRK method as defined below.

\begin{mydef}[FSRK method] \label{def:FSRK}
Consider \eqref{cauchy_problem}, and assume that we advance the time
integration by choosing a combination of an $s$-stage OS method and
Runge--Kutta time-stepping methods. Let
$\{ \alpha_k^{[\ell]} \}_{k=1,2,\dots,s}^{\ell=1,2,\dots,N}$ be the
coefficients of the OS method. Let $\begin{array}{c|c}
	\tcc[\ell]_k & \tA[\ell]_k \\
	\hline
	& \tbb[\ell]_k \\
\end{array}$ be the Butcher tableau of the $\ts^{[\ell]}_k$-stage Runge--Kutta method applied to operator $l$ at OS stage $k$. 
Then, one step of an FSRK method reads

	\begin{subequations}\label{eq:fsrk}
		\begin{align}\label{eq:fsrk1}
			& \yy_{n+1} = \yy_n 
			+  \Dt \sum\limits_{k=1}^{s}\sum\limits_{\ell=1}^{\Nop} \sum\limits_{i=1}^{\ts_{k}^{[\ell]}} \aaalpha{k}{\ell} \tb[\ell]_{k,i} \Fl[\ell] \left(t_k^{[\ell]} + \tc[\ell]_{k,i}\aaalpha{k}{\ell}\Dt, \YY_{k,i}^{[\ell]}\right), \\
			\label{eq:fsrk2}
			& \begin{aligned}
				\YY_{k,i}^{[\ell]} = \yy_n  & + \Dt \sum\limits_{k'=1}^{k-1}\sum\limits_{\ell'=1}^{\Nop} \sum\limits_{i=1}^{\ts_{k'}^{[\ell']}} \aaalpha{k'}{\ell'} \tb[\ell']_{k',i} \Fl[\ell'] \left(t_{k'}^{[\ell']} + \tc[\ell']_{k',i}\aaalpha{k'}{\ell'}\Dt, \YY_{k',i}^{[\ell']}\right)   \\
				& + 
				\Dt \sum\limits_{\ell'=1}^{\ell-1}\sum\limits_{i=1}^{\ts_{k}^{[\ell']}}\aaalpha{k}{\ell'} \tb[\ell']_{k,i} \Fl[\ell'] \left(t_k^{[\ell']} + \tc[\ell']_{k,i}\aaalpha{k}{\ell'}\Dt, \YY_{k,i}^{[\ell']}\right)   \\
				& + 
				\Dt  \sum\limits_{j=1}^{\ts_{k}^{[\ell]}} \aaalpha{k}{\ell} \ta[\ell]_{k,ij} \Fl[\ell] \left(t_k^{[\ell]} + \tc[\ell]_{k,j}\aaalpha{k}{\ell}\Dt, \YY_{k,j}^{[\ell]}\right) ,
			\end{aligned}
		\end{align}
	\end{subequations}
	where $\ta[\ell]_{k,ij}$ is entry $(i,j)$ of $ \tA[\ell]_k$,
	$\tb[\ell]_i$ and $\tc[\ell]_i$ are entry $i$ of $\tbb[\ell]$ and
	$\tcc[\ell]$ respectively, and
	$t_k^{[\ell]} =t_n+\sum\limits_{k'=1}^{k-1} \aaalpha{k'}{\ell}\Dt$
	is the time for operator $\ell$ at the beginning of 
	operator-splitting stage $k$.  
\end{mydef}

\section{Main results}
\label{sec:main}
In this section, we solve \eqref{cauchy_problem} using the
operator-splitting method \eqref{eq:os_method} where each subsystem
$\displaystyle \dv{\yyy[\ell]}{t} = \Fl[\ell](t,\yyy[\ell])$ is
integrated using a Runge--Kutta method \eqref{eq:rk}. Because a
Runge--Kutta method is applied to a subsystem that is usually solved
over a fraction, $\alpha_k^{[\ell]}$, of $\Dt$, this results in an
FSRK method. We show that the FSRK can be regarded as a GARK method,
present the Butcher tableau associated with it, and analyze its
stability.

We first construct the Butcher tableau of an FSRK method in
\cref{th:butcher_theorem}.

\begin{thm}\label{th:butcher_theorem}
	The FSRK method~\cref{def:FSRK} applied to~\eqref{cauchy_problem}
	can be represented as an extended Butcher tableau with the 
	structure~\eqref{eq:arktab} that incorporates the coefficients of
	the Runge--Kutta integrators scaled by the coefficients of the OS
	method. 
	The entries of \cref{eq:arktab} take the form
\begin{equation}\label{eq:Abcell}
	\begin{aligned}
		& \AAA[\ell]  = 
		\begin{bmatrix}
			\AAA[\ell]_1  & & & & \\
			\ee \bb[\ell]_1   & \AAA[\ell]_2  & &  & \\
			\vdots   & \ee\bb[\ell]_2  & \ddots &  & \\
			\vdots	  &  \vdots	& & \ddots &  \\
			\ee \bb[\ell]_1   & \ee \bb[\ell]_2 & \cdots  & \ee \bb[\ell]_{s-1} & \AAA[\ell]_s \\
		\end{bmatrix},  \\
		& \bb[\ell] = \begin{bmatrix}
			\bb[\ell]_1 & \bb[\ell]_2 & \cdots & \bb[\ell]_s 
		\end{bmatrix}, \\
		& \cc[\ell] = \begin{bmatrix}
				\cc[\ell]_1 \\
				\cc[\ell]_2 \\
				\vdots \\
				\cc[\ell]_s 
		\end{bmatrix}, \qquad \ell=1,2,\dots,\Nop, 
	\end{aligned}
\end{equation}
where each matrix $\AAA[\ell]$ is a block lower-triangular matrix of
size $\bbS{}{}\times \bbS{}{}$ and each row vector $\bb[\ell]$ is a
block vector of size $1 \times \bbS{}{}$, where
$\bbS{}{}
= \sum\limits_{k=1}^s \sum\limits_{\ell=1}^\Nop \ts_{k}^{[\ell]}$, and
$\ee$ denotes a column vector of ones.  Diagonal block $k$ of
$\AAA[\ell]$ is denoted by $\AAA[\ell]_k$ of size
$\bbS{k}{\Nop} \times \bbS{k}{\Nop}$, where
$\bbS{k}{\ell}= \sum\limits_{i=1}^\ell \tilde{s}_k^{[i]}$,
block $k$ of $\bb[\ell]$ is denoted by $\bb[\ell]_k$ of size
$1\times \bbS{k}{\Nop}$, block $k$ of $\cc[\ell]$ is denoted by
$\cc[\ell]_k$ of size $\bbS{k}{\Nop} \times 1$ with
\begin{equation}\label{eq:Abc_k_ell}
	\begin{aligned}
		& \AAA[\ell]_k  = 
		\begin{bmatrix}
			\Zero_{\bbS{k}{\ell-1} \times \bbS{k}{\ell-1} } & \Zero_{\bbS{k}{\ell-1} \times \ts_{k}^{[\ell]}  }  & \Zero_{\bbS{k}{\ell-1} \times (\bbS{k}{\Nop} -\bbS{k}{\ell}) } \\[2ex]
			\Zero_{\ts_{k}^{[\ell]} \times \bbS{k}{\ell-1}} & \alpha_k^{[\ell]} \tA[\ell]_k & \Zero_{\ts_{k}^{[\ell]} \times (\bbS{k}{\Nop}-\bbS{k}{\ell})} \\[2ex]
			\Zero_{(\bbS{k}{\Nop}-\bbS{k}{\ell}) \times \bbS{k}{\ell-1}} & \alpha_k^{[\ell]} \ee \tbb[\ell]_k  & \Zero_{(\bbS{k}{\Nop}-\bbS{k}{\ell}) \times (\bbS{k}{\Nop}-\bbS{k}{\ell})} \\ 
		\end{bmatrix}, \\
		& \bb[\ell]_k = 
		\begin{bmatrix}
			\Zero_{1\times \bbS{k}{\ell -1}} & \alpha_k^{[\ell]} \tbb[\ell]_k & \Zero_{1\times (\bbS{k}{\Nop} - \bbS{k}{\ell})}.
		\end{bmatrix}, \\
		& \cc[\ell]_{k} = \begin{bmatrix}
			(\sum\limits_{i=1}^{k-1} \alpha_i^{[\ell]})\ee_{\bbS{k}{\ell-1}} \\
			(\sum\limits_{i=1}^{k-1} \alpha_i^{[\ell]}) \ee_{\ts_{k}^{[\ell]}} + \alpha_{k}^{[\ell]} \tcc[\ell]_k \\
			(\sum\limits_{i=1}^{k} \alpha_i^{[\ell]}) \ee_{\bbS{k}{\Nop} -\bbS{k}{\ell}}
		\end{bmatrix}.
	\end{aligned}
\end{equation}

\end{thm}

\begin{proof}

	When solving \cref{cauchy_problem} using the FSRK
	method~\cref{eq:fsrk}, let $\YY^{[\ell]}_k$ be the intermediate solution
	values after solving operator $\ell$ at operator-splitting
	stage $k$. Let
	$\displaystyle \{\YY_{k,j}^{[\ell]} \}_{j=1}^{\ts_{k}^{[\ell]}}$
	be the intermediate Runge--Kutta solution values when solving operator
	$\ell$ at operator-splitting stage $k$. Let
	$t_k^{[\ell]} =t_n+\sum\limits_{k'=1}^{k-1} \aaalpha{k'}{\ell}\Dt$
	be the time for operator $\ell$ at the beginning of
	operator-splitting stage $k$.
	\\
	When finding $\YY_k^{[\ell]}$, we apply the Runge--Kutta method
	with Butcher tableau $\begin{array}{c|c}
		\tcc[\ell]_k & \tA[\ell]_k \\
		\hline
		& \tbb[\ell]_k \\
	\end{array}$ to operator $\ell$ 
	with initial condition $\YY_{k,0}^{[\ell]}$ over the interval $[t_k^{[\ell]},t_k^{[\ell]}+ \alpha_k^{[\ell]}\Dt]$. The initial condition $\YY_{k,0}^{[\ell]}$ is defined as the following piecewise function: 
	\begin{equation} \label{FSRK_ic}
		\YY_{k,0}^{[\ell]} = \left\{ \begin{array}{ll}
			\yy_n, & \text{ if } \ell =1, k=1, \\[1ex]
			\YY_{k-1}^{[\Nop]}, & \text{ if } \ell =1, k>1, \\[1ex]
			\YY_{k}^{[\ell-1]}, & \text{ if } \ell > 1. \\[1ex]
		\end{array}\right. 
	\end{equation}
	Applying one-step of Runge--Kutta method to find $\YY_k^{[\ell]}$, we
	get
	\begin{subequations}\label{eq:FSRK_recursive}
		\begin{align}\label{eq:FSRK_recursive_Yk}
			\YY^{[\ell]}_k & = \YY_{k,0}^{[\ell]} +  \aaalpha{k}{\ell} \Dt  \sum\limits_{i=1}^{\ts_{k}^{[\ell]}}  \tb[\ell]_{k,i} \Fl[\ell] (t_k^{[\ell]}  + \tc[\ell]_{k,i}\aaalpha{k}{\ell}\Dt, \YY_{k,i}^{[\ell]}), \\
			\label{eq:FSRK_recursive_Ykj}
			\YY_{k,i}^{[\ell]} & = \YY_{k,0}^{[\ell]} + 
			\aaalpha{k}{\ell} \Dt  \sum\limits_{j=1}^{\ts_{k}^{[\ell]}}  \ta[\ell]_{k,ij} \Fl[\ell] (t_k^{[\ell]} + \tc[\ell]_{k,j}\aaalpha{k}{\ell}\Dt, \YY_{k,j}^{[\ell]}).
		\end{align}
	\end{subequations}
	Using the recursive definition \cref{eq:FSRK_recursive_Yk}, we can
	find a general formula for $\YY_{k}^{[\ell]}:$
	\begin{equation}\label{eq:FSRK_general_Yk}
		\begin{aligned}
			\YY^{[\ell]}_k = \yy_n & +\Dt \sum\limits_{k'=1}^{k-1}\sum\limits_{\ell'=1}^{\Nop} \sum\limits_{i=1}^{\ts_{k'}^{[\ell']}} \aaalpha{k'}{\ell'} \tb[\ell']_{k',i} \Fl[\ell'] (t_{k'}^{[\ell']} + \tc[\ell']_{k',i}\aaalpha{k'}{\ell'}\Dt, \YY_{k',i}^{[\ell']})    \\
			& + \Dt \sum\limits_{\ell'=1}^{\ell} \sum\limits_{i=1}^{\ts_{k}^{[\ell']}} \aaalpha{k}{\ell'} \tb[\ell']_{k,i} \Fl[\ell'] (t_k^{[\ell']} + \tc[\ell']_{k,i}\aaalpha{k}{\ell'}\Dt, \YY_{k,i}^{[\ell']}).
		\end{aligned}
	\end{equation}
	To find an explicit formula for the initial condition
	$\YY_{k,0}^{[\ell]}$ in \cref{eq:FSRK_recursive}, we substitute
	$\YY_{k-1}^{[\Nop]}$ and $\YY_{k}^{[\ell-1]}$
	using~\cref{eq:FSRK_general_Yk} into \cref{FSRK_ic} to yield
	\begin{equation}\label{FSRK_ic_explicit}
		\YY_{k,0}^{[\ell]} = \left\{ \begin{array}{ll}
			\yy_n, &  \text{ if } \ell =1, k=1, \\[1ex]
			\begin{aligned}
				\yy_n  & +\Dt \sum\limits_{k'=1}^{k-1}\sum\limits_{\ell'=1}^{\Nop} \sum\limits_{i=1}^{\ts_{k'}^{[\ell']}} \aaalpha{k'}{\ell'} \tb[\ell']_{k',i} \Fl[\ell'] (t_{k'}^{[\ell']} + \tc[\ell']_{k',i}\aaalpha{k'}{\ell'}\Dt, \YY_{k',i}^{[\ell']})   
			\end{aligned}, & \text{ if } \ell =1, k>1, \\[1ex]
			\begin{aligned}
				\yy_n  & +\Dt \sum\limits_{k'=1}^{k-1}\sum\limits_{\ell'=1}^{\Nop} \sum\limits_{i=1}^{\ts_{k'}^{[\ell']}} \aaalpha{k'}{\ell'} \tb[\ell']_{k',i} \Fl[\ell'] (t_{k'}^{[\ell']} + \tc[\ell']_{k',i}\aaalpha{k'}{\ell'}\Dt, \YY_{k',i}^{[\ell']})    \\
				& + \Dt \sum\limits_{\ell'=1}^{\ell-1} \sum\limits_{i=1}^{\ts_{k}^{[\ell']}} \aaalpha{k}{\ell'} \tb[\ell']_{k,i} \Fl[\ell'] (t_k^{[\ell']} + \tc[\ell']_{k,i}\aaalpha{k}{\ell'}\Dt, \YY_{k,i}^{[\ell']})
			\end{aligned} , & \text{ if } \ell > 1. \\[1ex]
		\end{array}\right. 
	\end{equation}
	Substituting \cref{FSRK_ic_explicit} into
	\cref{eq:FSRK_recursive_Ykj}, we recover \cref{eq:fsrk2}.  To
	construct a Butcher tableau that includes all the data, we need a
	tableau of size $\bbS{}{} \times \bbS{}{}$, consisting of $sN$ blocks
	of sizes $\ts_{k}^{[\ell]}$. Each block corresponds to
	$\{\YY_{k,j}^{[\ell]}\}_{j=1}^{\ts_{k}^{[\ell]}}$. For clarity, we
	mark those $\{\YY_{k,j}^{[\ell]}\}_{j=1}^{\ts_{k}^{[\ell]}}$ on
	the tableau \cref{tab:comp_tableau_proof}. \Cref{eq:fsrk2} implies
	that in the row block corresponds to
	$\{\YY_{k,j}^{[\ell]}\}_{j=1}^{\ts_{k}^{[\ell]}}$, the $(k',\ell')$
	block entry is
	$$\left\{
	\begin{array}{ll}
		\alpha_{k'}^{[\ell']} \ee \tbb[\ell']_{k'}, & \text{ if } k'<k \text{ and } \ell'\leq \Nop, \\[1ex]
		\alpha_{k'}^{[\ell']} \ee \tbb[\ell']_{k'}, & \text{ if } k=k \text{ and } \ell' < \ell, \\[1ex]
		\alpha_{k'}^{[\ell']} \tA[\ell']_{k'}, & \text{ if } k=k \text{ and } \ell' = \ell, \\[1ex]
		\Zero, & \text{ if } k'>k.
	\end{array}\right. $$ 
	An example of a row block that corresponds to
	$\{\YY_{k,j}^{[\ell]}\}_{j=1}^{\ts_{k}^{[\ell]}}$ is shown in
	\cref{tab:comp_tableau_proof}.
	\begin{equation} \label{tab:comp_tableau_proof}
		\resizebox{0.9\textwidth}{!}{
			$\begin{array}{c|cccc|c|ccccc|c|cccc}
				&\{\YY_{1,j}^{[1]}\}_{j=1}^{\ts_{1}^{[1]}} & \{\YY_{1,j}^{[2]}\}_{j=1}^{\ts_{1}^{[2]}} & 
				\cdots &
				\{\YY_{1,j}^{[\Nop]}\}_{j=1}^{\ts_{1}^{[\Nop]}} &
				\cdots  & 
				\{\YY_{k,j}^{[1]}\}_{j=1}^{\ts_{k}^{[1]}} & \cdots & \{\YY_{k,j}^{[\ell]}\}_{j=1}^{\ts_{k}^{[\ell]}} & 
				\cdots &
				\{\YY_{k,j}^{[\Nop]}\}_{j=1}^{\ts_{k}^{[\Nop]}} &
				\cdots  & 
				\{\YY_{s,j}^{[1]}\}_{j=1}^{\ts_{s}^{[1]}} & \{\YY_{s,j}^{[2]}\}_{j=1}^{\ts_{s}^{[2]}} & 
				\cdots &
				\{\YY_{s,j}^{[\Nop]}\}_{j=1}^{\ts_{s}^{[\Nop]}}  \\
				\hline 
				\{\YY_{1,j}^{[1]}\}_{j=1}^{\ts_{1}^{[1]}} &   
				&&&&&   & 
				&&&&   & 
				&&& \\
				\{\YY_{1,j}^{[2]}\}_{j=1}^{\ts_{1}^{[2]}} &   
				&&&&&   & 
				&&&&   & 
				&&& \\
				\vdots &   
				&&&&&   & 
				&&&&   & 
				&&& \\
				\{\YY_{1,j}^{[\Nop]}\}_{j=1}^{\ts_{1}^{[\Nop]}} &   
				&&&&&   & 
				&&&&   & 
				&&& \\ 
				\hline 
				\vdots &   
				&&&&&   & 
				&&&&   & 
				&&& \\
				\hline 
				\{\YY_{k,j}^{[1]}\}_{j=1}^{\ts_{k}^{[1]}} &   
				&&&&&   & 
				&&&&   & 
				&&& \\
				\vdots &   
				&&&&&   & 
				&&&&   & 
				&&& \\
				\{\YY_{k,j}^{[\ell]}\}_{j=1}^{\ts_{k}^{[\ell]}} & \aaalpha{1}{1} \ee \tbb[1]_1
				& \aaalpha{1}{2}\ee\tbb[2]_1& \cdots &\aaalpha{1}{\Nop}\ee\tbb[\Nop]_1& \cdots   & 
				\aaalpha{k}{1}\ee\tbb[1]_k 
				& \cdots 
				&\aaalpha{\ell}{k}\tA[\ell]_k & \Zero& \Zero &  \cdots  & 
				\Zero& \Zero & \cdots& \Zero \\
				\vdots &   
				&&&&&   & 
				&&&&   & 
				&&& \\
				\{\YY_{k,j}^{[\Nop]}\}_{j=1}^{\ts_{k}^{[\Nop]}} &   
				&&&&&   & 
				&&&&   & 
				&&& \\ 
				\hline 
				\vdots &   
				&&&&&   & 
				&&&&   & 
				&&& \\
				\hline 
				\{\YY_{s,j}^{[1]}\}_{j=1}^{\ts_{s}^{[1]}} &   
				&&&&&   & 
				&&&&   & 
				&&& \\
				\{\YY_{s,j}^{[2]}\}_{j=1}^{\ts_{s}^{[2]}} &   
				&&&&&   & 
				&&&&   & 
				&&& \\
				\vdots &   
				&&&&&   & 
				&&&&   & 
				&&& \\
				\{\YY_{s,j}^{[\Nop]}\}_{j=1}^{\ts_{s}^{[\Nop]}} &   
				&&&&&   & 
				&&&&   & 
				&&& \\ 
				\hline 
			\end{array}. $}
	\end{equation}
	Completing the remaining entries
	of~\cref{tab:comp_tableau_proof}, we obtain the compact
	tableau \cref{tab:extended_tab_comp} associated to the FSRK
	method. We note that the FSRK tableau
	\cref{tab:extended_tab_comp} is organized by
	operator-splitting stages. Keeping only block column $\ell$ in
	each stage block $k$ and filling the other entries with
	$\Zero$ leads to the form of $\AAA[\ell]$ in \cref{eq:Abcell}
	with the ARK structure. The values of $\bb[\ell]$ and
	$\cc[\ell]$ follow directly from \cref{eq:fsrk1}. 
\end{proof}

\begin{rmk}
An example of the extended Butcher tableau of ARK form \cref{eq:arktab} is given in
\cref{tab:extended_tab}. Essentially, in each block
$\AAA[\ell]_k$, the zeros are padding for operators other than
operator
$\ell$ that is being integrated. A more compact form of the Butcher
tableau that removes the zero padding and combines the
$\AAA[\ell]$ to reveal the block lower-triangular structure is given
in \cref{tab:extended_tab_comp}. 
Each diagonal
block of \cref{tab:extended_tab_comp} is a block lower-triangular
matrix that shows the specific structured coupling between the
operators of an FSRK method. As expected, the coupling between
operators is more restrictive than a general GARK method.  We note
that many published FSRK methods are not internally consistent. Even
if after each stage all the operators have the same abscissae,
internal consistency may fail at the stages of the Runge--Kutta
sub-integrators. Besides the ostensible drawback of not
	being able to interpret the stage values as the solution at a
	given time, the lack of internal consistency typically makes it
	more difficult to construct higher-order GARK methods due to the
	increased number of order conditions not automatically satisfied.
\end{rmk}

\begin{equation} \label{tab:extended_tab_comp}
\resizebox{0.925\textwidth}{!}{%
	$	\begin{array}{cccc|cccc|cccc|cccc}
		\alpha_{1}^{[1]} \tA[1]_1 & 0 & \cdots & 0  &
		&   &   &    &
		&   &   &   
		&   &   &\\
		\alpha_{1}^{[1]}\ee \tbb[1]_1 & \alpha_{1}^{[2]} \tA[2]_1 & 
		\ddots & 0  &
		&   &   &    &
		&   &   &   
		&   &   &\\
		\alpha_{1}^{[1]}\ee \tbb[1]_1 & \alpha_{1}^{[2]}\ee \tbb[2]_1 & \ddots & 0  &
		&   &   &   & 
		&   &   &  
		&   &   &\\
		\alpha_{1}^{[1]}\ee \tbb[1]_1 & \alpha_{1}^{[2]}\ee \tbb[2]_1 & \cdots & \alpha_{1}^{[\Nop]} \tA[\Nop]_1 &
		&   &   &    &
		&   &   &   
		&   &   &\\ 
		\hline 
		\alpha_{1}^{[1]}\ee \tbb[1]_1 & \alpha_{1}^{[2]}\ee \tbb[2]_1 & \cdots & \alpha_{1}^{[\Nop]}\ee \tbb[\Nop]_1 &
		\alpha_{2}^{[1]} \tA[1]_2 & 0  & \cdots  &  0 &   
		&   &   & 
		&   &   &\\ 
		\alpha_{1}^{[1]}\ee \tbb[1]_1 & \alpha_{1}^{[2]}\ee \tbb[2]_1 & \cdots & \alpha_{1}^{[\Nop]}\ee \tbb[\Nop]_1 &
		\alpha_{2}^{[1]} \ee \tbb[1]_2 & \alpha_{2}^{[2]} \tA[2]_2  & \ddots  &  0 &   
		&   &   & 
		&   &   &\\ 
		\alpha_{1}^{[1]}\ee \tbb[1]_1 & \alpha_{1}^{[2]}\ee \tbb[2]_1 & \cdots & \alpha_{1}^{[\Nop]}\ee \tbb[\Nop]_1 &
		\alpha_{2}^{[1]} \ee \tbb[1]_2 & \alpha_{2}^{[2]}\ee \tbb[2]_2  & \ddots  &  0 &   
		&   &   & 
		&   &   &\\ 
		\alpha_{1}^{[1]}\ee \tbb[1]_1 & \alpha_{1}^{[2]}\ee \tbb[2]_1 & \cdots & \alpha_{1}^{[\Nop]}\ee \tbb[\Nop]_1 &
		\alpha_{2}^{[1]} \ee \tbb[1]_2 & \alpha_{2}^{[2]}\ee \tbb[2]_2  & \cdots  & \alpha_{2}^{[\Nop]} \tA[\Nop]_2   &   
		&   &   & 
		&   &   &\\ 
		\hline 
		\alpha_{1}^{[1]}\ee \tbb[1]_1 & \alpha_{1}^{[2]}\ee \tbb[2]_1 & \cdots & \alpha_{1}^{[\Nop]}\ee \tbb[\Nop]_1 &
		\alpha_{2}^{[1]} \ee \tbb[1]_2 & \alpha_{2}^{[2]}\ee \tbb[2]_2  & \cdots  & \alpha_{2}^{[\Nop]} \ee \tbb[\Nop]_2   &   
		\ddots &   &   & 
		&   &   &\\ 
		\alpha_{1}^{[1]}\ee \tbb[1]_1 & \alpha_{1}^{[2]}\ee \tbb[2]_1 & \cdots & \alpha_{1}^{[\Nop]}\ee \tbb[\Nop]_1 &
		\alpha_{2}^{[1]} \ee \tbb[1]_2 & \alpha_{2}^{[2]}\ee \tbb[2]_2  & \cdots  & \alpha_{2}^{[\Nop]} \ee \tbb[\Nop]_2   &   
		& \ddots   &   & 
		&   &   &\\ 
		\alpha_{1}^{[1]}\ee \tbb[1]_1 & \alpha_{1}^{[2]}\ee \tbb[2]_1 & \cdots & \alpha_{1}^{[\Nop]}\ee \tbb[\Nop]_1 &
		\alpha_{2}^{[1]} \ee \tbb[1]_2 & \alpha_{2}^{[2]}\ee \tbb[2]_2  & \cdots  & \alpha_{2}^{[\Nop]} \ee \tbb[\Nop]_2   &   
		&   & \ddots   & 
		&   &   &\\ 
		\alpha_{1}^{[1]}\ee \tbb[1]_1 & \alpha_{1}^{[2]}\ee \tbb[2]_1 & \cdots & \alpha_{1}^{[\Nop]}\ee \tbb[\Nop]_1 &
		\alpha_{2}^{[1]} \ee \tbb[1]_2 & \alpha_{2}^{[2]}\ee \tbb[2]_2  & \cdots  & \alpha_{2}^{[\Nop]} \ee \tbb[\Nop]_2   &   
		&   &  & \ddots   & 
		&   &   &\\
		\hline 
		\alpha_{1}^{[1]}\ee \tbb[1]_1 & \alpha_{1}^{[2]}\ee \tbb[2]_1 & \cdots & \alpha_{1}^{[\Nop]}\ee \tbb[\Nop]_1 &
		\alpha_{2}^{[1]} \ee \tbb[1]_2 & \alpha_{2}^{[2]}\ee \tbb[2]_2  & \cdots  & \alpha_{2}^{[\Nop]} \ee \tbb[\Nop]_2   &   
		&   &    &  & 
		\alpha_{s}^{[1]} \tA[1]_s & 0  & \cdots  & 0 \\
		\alpha_{1}^{[1]}\ee \tbb[1]_1 & \alpha_{1}^{[2]}\ee \tbb[2]_1 & \cdots & \alpha_{1}^{[\Nop]}\ee \tbb[\Nop]_1 &
		\alpha_{2}^{[1]} \ee \tbb[1]_2 & \alpha_{2}^{[2]}\ee \tbb[2]_2  & \cdots  & \alpha_{2}^{[\Nop]} \ee \tbb[\Nop]_2   &   
		&   &    & & 
		\alpha_{s}^{[1]}\ee \tbb[1]_s & \alpha_{s}^{[2]} \tA[2]_s  & \ddots  & 0 \\
		\alpha_{1}^{[1]}\ee \tbb[1]_1 & \alpha_{1}^{[2]}\ee \tbb[2]_1 & \cdots & \alpha_{1}^{[\Nop]}\ee \tbb[\Nop]_1 &
		\alpha_{2}^{[1]} \ee \tbb[1]_2 & \alpha_{2}^{[2]}\ee \tbb[2]_2  & \cdots  & \alpha_{2}^{[\Nop]} \ee \tbb[\Nop]_2   &   
		&   &    & & 
		\alpha_{s}^{[1]}\ee \tbb[1]_s & \alpha_{s}^{[2]} \ee\tbb[2]_s  & \ddots & 0 \\
		\alpha_{1}^{[1]}\ee \tbb[1]_1 & \alpha_{1}^{[2]}\ee \tbb[2]_1 & \cdots & \alpha_{1}^{[\Nop]}\ee \tbb[\Nop]_1 &
		\alpha_{2}^{[1]} \ee \tbb[1]_2 & \alpha_{2}^{[2]}\ee \tbb[2]_2  & \cdots  & \alpha_{2}^{[\Nop]} \ee \tbb[\Nop]_2   &   
		&   &    & & 
		\alpha_{s}^{[1]}\ee \tbb[1]_s & \alpha_{s}^{[2]} \ee\tbb[2]_s  & \cdots & \alpha_{s}^{[\Nop]}  \tA[\Nop]_s\\
		\hline 
		\alpha_{1}^{[1]} \tbb[1]_1 & \alpha_{1}^{[2]} \tbb[2]_1 & \cdots & \alpha_{1}^{[\Nop]} \tbb[\Nop]_1 &
		\alpha_{2}^{[1]}  \tbb[1]_2 & \alpha_{2}^{[2]} \tbb[2]_2  & \cdots  & \alpha_{2}^{[\Nop]} \tbb[\Nop]_2   &   
		\cdots & \cdots  & \cdots   & \cdots & 
		\alpha_{s}^{[1]} \tbb[1]_s & \alpha_{s}^{[2]} \tbb[2]_s  & \cdots & \alpha_{s}^{[\Nop]}  \tbb[\Nop]_s\\
	\end{array}$
}
\end{equation}

\begin{equation} \label{tab:extended_tab}	
\resizebox{0.925\textwidth}{!}{
	$\begin{array}{c:c:c||c:c:c||c||c:c:c}
		\multicolumn{3}{c||}{\AAA[1]}  & \multicolumn{3}{c||}{\AAA[2]}  & \cdots  & \multicolumn{3}{c}{\AAA[\Nop]}  \\ 
		\hline 
		\begin{array}{ccc}
			\alpha_1^{[1]} \tA[1]_1 & 0 & 0   \\
			\alpha_1^{[1]} \ee \tbb[1]_1 & 0 & 0   \\
			\vdots  & \vdots  & \vdots   \\
			\vdots  & \vdots  & \vdots   \\
		\end{array}	&    &     &
		\begin{array}{ccc}
			0 & 0 & 0 \\
			0 & \alpha_1^{[2]} \tA[2]_1 & 0 \\
			0 & \alpha_1^{[2]} \ee\tbb[2]_1 & 0 \\
			\vdots & \vdots   & \vdots  \\
		\end{array}	&    &     &
		\cdots    &
		\begin{array}{ccc}
			0 & 0 & 0 \\
			\vdots  & \vdots  & \vdots \\
			\vdots & \vdots  & \vdots  \\
			0 & 0 & \alpha_1^{[N]}\tA[\Nop]_1 \\
		\end{array}	&    &        \\ 
		\hdashline
		\begin{array}{ccc}
			\alpha_1^{[1]} \ee \tbb[1]_1 & 0 & 0  \\
			\vdots  & \vdots  & \vdots   \\
			\vdots  & \vdots  & \vdots   \\
			\vdots  & \vdots  & \vdots   \\
		\end{array} & \begin{array}{ccc}
			\alpha_2^{[1]} \tA[1]_2 & 0 & 0 \\
			\alpha_2^{[1]}\ee \tbb[1]_2 & 0 & 0 \\
			\vdots  & \vdots  & \vdots  \\
			\vdots  & \vdots  & \vdots   \\
		\end{array} 
		& 
		&\begin{array}{ccc}
			0 & \alpha_1^{[2]} \ee\tbb[2]_1& 0  \\
			\vdots  & \vdots  & \vdots \\
			\vdots  & \vdots  & \vdots  \\
			\vdots  & \vdots  & \vdots \\
		\end{array} &
		\begin{array}{ccc}
			0 & 0 & 0 \\
			0 & \alpha_2^{[2]} \tA[2]_2 & 0 \\
			0 & \alpha_2^{[2]} \ee \tbb[2]_2 & 0 \\
			\vdots  & \vdots  & \vdots  \\
		\end{array}  & 
		&\cdots & \begin{array}{ccc}
			0 & 0 & \alpha_1^{[\Nop]}\ee\tbb[\Nop]_1 \\
			\vdots  & \vdots  & \vdots \\
			\vdots  & \vdots  & \vdots \\
			\vdots  & \vdots  & \vdots \\
		\end{array} &  \begin{array}{ccc}
			0 & 0 & 0 \\
			\vdots  & \vdots  & \vdots \\
			\vdots  & \vdots  & \vdots \\
			0 & 0 & \alpha_2^{[\Nop]}\tA[\Nop]_2 \\
		\end{array} &  \\
		\hdashline 
		\vdots & \vdots  & \vdots  & \vdots & \vdots &  \vdots  &  \vdots  & \vdots &\vdots  & \vdots  \\
		\hdashline 
		\begin{array}{ccc}
			\vdots  & \vdots  & \vdots   \\
			\vdots  & \vdots  & \vdots  \\
			\vdots  & \vdots  & \vdots  \\
			\alpha_1^{[1]} \ee \tbb[1]_1 & 0 & 0   \\
		\end{array} & \begin{array}{ccc}
			\vdots  & \vdots  & \vdots   \\
			\vdots  & \vdots  & \vdots \\
			\vdots  & \vdots  & \vdots \\
			\alpha_2^{[1]} \ee\tbb[1]_2 & 0 & 0 \\
		\end{array} & \begin{array}{ccc}
			\alpha_s^{[1]} \tA[1]_s & 0 & 0 \\
			\alpha_s^{[1]} \ee\tbb[1]_s & 0 & 0 \\
			\vdots & \vdots & \vdots \\
			\alpha_s^{[1]} \ee\tbb[1]_s & 0 & 0 \\
		\end{array} &\begin{array}{ccc}
			\vdots  & \vdots  & \vdots  \\
			\vdots  & \vdots  & \vdots \\
			\vdots & \vdots & \vdots \\
			0 & \alpha_1^{[2]} \ee\tbb[2]_1& 0 \\
		\end{array} & \begin{array}{ccc}
			\vdots  & \vdots  & \vdots \\
			\vdots  & \vdots  & \vdots \\
			\vdots & \vdots & \vdots \\
			0 & \alpha_2^{[2]}\ee \tbb[2]_2 & 0 \\
		\end{array} & \begin{array}{ccc}
			0 & 0 & 0 \\
			0 & \alpha_s^{[2]} \tA[2]_s & 0 \\
			0 & \alpha_s^{[2]}\ee \tbb[2]_s & 0 \\
			0 & \alpha_s^{[2]}\ee \tbb[2]_s & 0 \\
		\end{array} &\cdots & \begin{array}{ccc}
			\vdots  & \vdots  & \vdots \\
			\vdots  & \vdots  & \vdots \\
			\vdots & \vdots  & \vdots \\
			0 & 0 & \alpha_1^{[\Nop]}\ee\tbb[\Nop]_1 \\
		\end{array} &  \begin{array}{ccc}
			0 & 0 & \alpha_2^{[\Nop]}\ee\tbb[\Nop]_2 \\
			\vdots  & \vdots  & \vdots \\
			\vdots & \vdots  & \vdots \\
			0 & 0 & \alpha_2^{[N]}\ee\tbb[\Nop]_2 \\
		\end{array} &  \begin{array}{ccc}
			0 & 0 & 0 \\
			\vdots  & \vdots  & \vdots \\
			\vdots & \vdots  & \vdots \\
			0 & 0 & \alpha_s^{[\Nop]}\tA[\Nop]_s \\
		\end{array}\\
		\hline 
		\begin{array}{ccc}
			\alpha_1^{[1]} \tbb[1]_1 & 0 & 0 
		\end{array}
		& \begin{array}{ccc}
			\alpha_1^{[1]} \tbb[1]_1 & 0 & 0 
		\end{array}
		& \begin{array}{ccc}
			\alpha_s^{[1]} \tbb[1]_2 & 0 & 0
		\end{array}
		& \begin{array}{ccc}
			0 & \alpha_1^{[2]} \tbb[2]_1 & 0
		\end{array}
		& \begin{array}{ccc}
			0 & \alpha_2^{[2]} \tbb[2]_2 & 0
		\end{array}
		& \begin{array}{ccc}
			0 & \alpha_s^{[2]} \tbb[2]_s & 0
		\end{array}
		& \cdots
		& \begin{array}{ccc}
			0 & 0 & \alpha_1^{[N]} \tbb[\Nop]_1
		\end{array}
		& \begin{array}{ccc}
			0 & 0 & \alpha_2^{[N]} \tbb[\Nop]_2
		\end{array}
		& \begin{array}{ccc}
			0 & 0 & \alpha_s^{[N]} \tbb[\Nop]_s
		\end{array} \\
	\end{array}$	
}
\end{equation}

\begin{rmk}
Our construction of the extended Butcher tableau uses the OS method
\cref{eq:os_method}. The rows of the tableaux assume the
intermediate variables $\YY_{k,i}^{[\ell]}$ are ordered as they
appear in the FSRK method, i.e.,
$\YY_{1,1:\ts_{1}^{[1]}}^{[1]},\dots,\YY_{1,1:\ts_{1}^{[\Nop]}}^{[\Nop]},
\dots,$
$\YY_{s,1:\ts_{s}^{[1]}}^{[1]},\dots,\YY_{s,1:\ts_{s}^{[\Nop]}}^{[\Nop]}$. If
an operator-splitting method is constructed by composing
\cref{eq:os_godunov} and \cref{eq:os_godunovadj} over fractions of
$\Dt$, then the intermediate variables $\YY_{k,i}^{[\ell]}$ should
be re-ordered in the order they are applied to obtain an extended
Butcher tableau in the same structure as presented in
\cref{th:butcher_theorem}. If one operator-splitting stage
	of the form \cref{eq:os_godunov} is applied, the intermediate
	variables should be ordered as
	$\YY_{k,1:\ts_{k}^{[1]}}^{[1]},\dots,\YY_{k,1:\ts_{k}^{[\Nop]}}^{[\Nop]}$. If
	one operator-splitting stage of the form \cref{eq:os_godunovadj}
	is applied, the intermediate variables should be ordered as
	$\YY_{k,1:\ts_{k}^{[\Nop]}}^{[\Nop]},\dots,\YY_{k,1:\ts_{k}^{[1]}}^{[1]}$. 

Furthermore, the proof of \cref{th:butcher_theorem} shows that every FSRK tableau of the form \cref{tab:extended_tab_comp} can be written as an ARK tableau and vice-versa. We can also reorder the block rows and columns of \cref{tab:extended_tab_comp} to recover the GARK tableau of the form \cref{gark_tab}.

	For example, two-stage, second-order 2-operator-splitting
	methods form a one-parameter family of methods with free parameter
	$\OS22b$. We denote the members of this family by OS$_2$(2,2)-$\OS22b$
	and present their coefficients in \cref{tab:OS22b}.
	\begin{table}
		\centering
		\begin{tabular}{|c|c|c|}
			\hline 
			$k$ & $\alpha_k^{[1]}$ & $\alpha_k^{[2]}$ \\ \hline 
			1 & $\displaystyle \frac{2\OS22b-1}{2\OS22b -2}$   & $1-\OS22b$    \\ \hline 
			2 &  $\displaystyle - \frac{1}{2\OS22b-2}$  & $\OS22b$   \\ \hline 
		\end{tabular}
		\caption{Coefficients $\alpha_k^{[\ell]}$ for a two-stage,
			second-order, 2-OS method OS$_2$(2,2)-$\OS22b$. }
		\label{tab:OS22b}
	\end{table}
	Suppose a $2$-additive ODE is solved using the OS$_2$(2,2)-$\OS22b$
	method
	\[
	\pphi{2}{\OS22b \Dt} \circ \pphi{1}{-1/(2\OS22b-2))\Dt} \circ \pphi{2}{(1-\OS22b)\Dt} \circ \pphi{1}{(2\OS22b-1)/(2\OS22b-2)\Dt},
	\]
	where each operator is solved with an $\ts^{[\ell]}_k$-stage
	Runge--Kutta method with Butcher tableau $\begin{array}{c|c}
		\tcc[\ell]_k & \tA[\ell]_k \\
		\hline
		& \tbb[\ell]_k 
	\end{array}$. The compact version of the extended Butcher tableau constructed using \cref{th:butcher_theorem} is given in \cref{ex:OS22b_comp}, where we present only the main matrix and label the $\osgarkY{1}{1}{1:\ts_{k}^{[\ell]}}$ along the rows and columns for clarity. 
	\begin{equation} 
		\label{ex:OS22b_comp}
		\begin{array}{c|cc:cc}
			& \osgarkY{1}{1}{1:\ts_{1}^{[1]}} &  \osgarkY{1}{2}{1:\ts_{1}^{[2]}} &  \osgarkY{2}{1}{1:\ts_{2}^{[1]}}  &   \osgarkY{2}{2}{1:\ts_{2}^{[2]}} \\
			\hline 
			\osgarkY{1}{1}{1:\ts_{1}^{[1]}}  & \frac{2\OS22b-1}{2\OS22b -2} \tA[1]_1     &    &      \\
			\osgarkY{1}{2}{1:\ts_{1}^{[2]}} & \frac{2\OS22b-1}{2\OS22b -2} \ee \tbb[1]_1   &   (1-\OS22b) \tA[2]_1   \\ 
			\hdashline
			\osgarkY{2}{1}{1:\ts_{2}^{[1]}}   & \frac{2\OS22b-1}{2\OS22b -2} \ee \tbb[1]_1     &   (1-\OS22b) \ee \tbb[2]_1   &   -\frac{1}{2\OS22b-2}\tA[1]_2   \\
			\osgarkY{2}{2}{1:\ts_{2}^{[2]}} &   \frac{2\OS22b-1}{2\OS22b -2} \ee \tbb[1]_1   &  (1-\OS22b) \ee \tbb[2]_1  &  -\frac{1}{2\OS22b-2} \ee \tbb[1]_2  & \OS22b \tA[2]_2   \\
			\hline 
		\end{array}. 
	\end{equation}
	The Butcher tableau corresponds to the GARK structure in \cref{gark_tab} is given in \cref{ex:OS22b_gark}. 
	\begin{equation} 
		\label{ex:OS22b_gark}
		\begin{array}{c|cc:cc}
			& \osgarkY{1}{1}{1:\ts_{1}^{[1]}} &  \osgarkY{2}{1}{1:\ts_{2}^{[1]}} &\osgarkY{1}{2}{1:\ts_{1}^{[2]}}   &   \osgarkY{2}{2}{1:\ts_{2}^{[2]}} \\
			\hline 
			\osgarkY{1}{1}{1:\ts_{1}^{[1]}}  & \frac{2\OS22b-1}{2\OS22b -2} \tA[1]_1     &    &      \\
			\osgarkY{2}{1}{1:\ts_{2}^{[1]}}   & \frac{2\OS22b-1}{2\OS22b -2} \ee \tbb[1]_1   &    -\frac{1}{2\OS22b-2}\tA[1]_2   & (1-\OS22b) \ee \tbb[2]_1  \\ 
			\hdashline
			\osgarkY{1}{2}{1:\ts_{1}^{[2]}}  & \frac{2\OS22b-1}{2\OS22b -2} \ee \tbb[1]_1     &     & (1-\OS22b) \tA[2]_1  &   \\
			\osgarkY{2}{2}{1:\ts_{2}^{[2]}} &   \frac{2\OS22b-1}{2\OS22b -2} \ee \tbb[1]_1   & -\frac{1}{2\OS22b-2}\ee \tbb[1]_2  & (1-\OS22b) \ee \tbb[2]_1      & \OS22b \tA[2]_2   \\
			\hline 
		\end{array}. 
	\end{equation}
	We note that the Butcher tableau \cref{ex:OS22b_gark} is
	equivalent to the compact Butcher tableau \cref{ex:OS22b_comp}
	after re-ordering the intermediate variables
	$\YY_{k,i}^{[\ell]}$. In implementation, we note that the format
	of~\cref{ex:OS22b_comp} is convenient because it is intuitive to
	construct the tableau from the data row-by-row in the order in
	which they are used and also to solve for $\YY_{k,i}^{[\ell]}$
	when using the block lower-triangular form.

\end{rmk}


\Cref{thm:stabfunc}
presents the main result on the stability function of an FSRK method.

\begin{thm} \label{thm:stabfunc}
We apply the FSRK method \eqref{eq:fsrk} to the linear test equation
\begin{equation}
	\label{eq:test}
	\dv{y}{t} = \sum\limits_{\ell=1}^{N} \lam[\ell] y. 
\end{equation}
We define $\zl = \Delta t \lam[\ell]$ and the stability function of
each Runge--Kutta method used to integrate each operator
to be $R_k^{[\ell]}(z^{[\ell]})$, $\ell = 1,2,\dots,\Nop$, $k=1,2,\dots,s$. Then the
stability function $R(z^{[1]}, z^{[2]}, \dots, z^{[\Nop]})$ of the
FSRK method is given by
\begin{equation} \label{stabilityofFSRK}
	R(z^{[1]}, z^{[2]}, \dots, z^{[N]}) 
	= \prod_{k=1}^s \prod_{\ell=1}^N  R_k^{[\ell]} (\alpha_k^{[\ell]} z^{[\ell]}) .
\end{equation}

That is, \textit{the stability function of the FSRK method applied
	to~\cref{eq:test} is the product of the stability functions of the
	individual RK methods with arguments scaled by the OS method
	coefficients.}
\end{thm}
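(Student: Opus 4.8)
The plan is to avoid manipulating the full extended Butcher tableau of \cref{th:butcher_theorem} and instead exploit the fact that, on a scalar linear problem, an FSRK method is simply a sequential composition of one-dimensional Runge--Kutta maps. First I would recall the elementary property of a single Runge--Kutta method with tableau $(\mathbf{A},\mathbf{b})$: its stability function is $R(w)=1+w\,\mathbf{b}^{T}(\mathbf{I}-w\mathbf{A})^{-1}\mathbf{1}$, and applying the method to the scalar equation $y'=\mu y$ with step size $h$ produces exactly $y_{n+1}=R(h\mu)\,y_{n}$. This is the only property of the individual sub-integrators that the argument uses.

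Next I would specialize \eqref{eq:test} to the scalar setting. Since operator $\ell$ has the linear form $\lambda^{[\ell]}y$, each subsystem $y'=\lambda^{[\ell]}y$ is itself scalar and linear. In the FSRK method \eqref{eq:os_gark}, the Runge--Kutta method with tableau $(\tilde{\mathbf{A}}^{[\ell]}_{k},\tilde{\mathbf{b}}^{[\ell]}_{k})$ integrates operator $\ell$ at operator-splitting stage $k$ over the fractional step $\alpha_k^{[\ell]}\Delta t$. By the property above, this sub-integration maps the incoming scalar value to $R_k^{[\ell]}(\alpha_k^{[\ell]}\,\Delta t\,\lambda^{[\ell]})$ times itself; writing $z^{[\ell]}=\Delta t\,\lambda^{[\ell]}$, the argument is precisely $\alpha_k^{[\ell]}z^{[\ell]}$, because the step size seen by this particular sub-integrator is $\alpha_k^{[\ell]}\Delta t$.

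Then I would assemble the single-step map by composition. By its construction from \eqref{eq:os_method}, one FSRK step factors as the composition of these sub-integrations, applied in the fixed order $\ell=1,\dots,N$ within each splitting stage and then swept over $k=1,\dots,s$, with the output of each sub-integration used as the input to the next. On a scalar state each sub-integration is multiplication by a scalar, so the composite is multiplication by the product of all the individual factors, giving $y_{n+1}=\left(\prod_{k=1}^{s}\prod_{\ell=1}^{N}R_k^{[\ell]}(\alpha_k^{[\ell]}z^{[\ell]})\right)y_{n}$. This is exactly the claimed stability function.

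The step I expect to be the crux is justifying the reduction to a clean sequential composition, that is, confirming that the cross-operator and cross-stage coupling encoded in the block lower-triangular blocks $\mathbf{1}\,\mathbf{b}^{[\ell]}_{k}$ of \cref{th:butcher_theorem} is nothing more than the bookkeeping of passing the current scalar value forward and contributes no additional terms. Equivalently, one could prove the theorem purely algebraically from the GARK stability function $R(\mathbf{z})=1+\left(\sum_{\ell}z^{[\ell]}\mathbf{b}^{[\ell]}\right)\left(\mathbf{I}-\sum_{\ell}z^{[\ell]}\mathbf{A}^{[\ell]}\right)^{-1}\mathbf{1}$ by inverting the block lower-triangular matrix $\mathbf{I}-\sum_{\ell}z^{[\ell]}\mathbf{A}^{[\ell]}$ and verifying that the result telescopes into the product; I would prefer the composition argument precisely because it bypasses this block-triangular inversion, at the cost of arguing the decoupling carefully.
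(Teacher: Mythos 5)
Your proposal is correct and follows essentially the same route as the paper's own proof: the paper likewise argues on the scalar test equation that each sub-integration multiplies the current value by $R_k^{[\ell]}(\alpha_k^{[\ell]}z^{[\ell]})$ and then composes these factors over $\ell=1,\dots,N$ and $k=1,\dots,s$ to obtain the product formula, without invoking the block lower-triangular tableau. The only difference is that you flag the reduction to a clean sequential composition as a point needing care (and sketch the alternative GARK-algebraic derivation, which the paper relegates to a remark), whereas the paper takes that reduction as immediate from the definition of the FSRK method.
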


\begin{proof}
Assume that we apply a Runge--Kutta method to the operator
$\Fl[\ell]$ at stage $k$ of the FS method. We refer to this
Runge--Kutta method as RK$_{k}^{[\ell]}$ with corresponding Butcher
tableau

\begin{equation*}
	\begin{array}{c|c}
		\tcc[\ell]_k & \tA[\ell]_k \\
		\hline 
		& \tbb[\ell]_k
	\end{array}.	
\end{equation*} 
Let $R_{k}^{[\ell]} (z^{[\ell]})$ be the stability function corresponds to RK$_{k}^{[\ell]}$.

Let $y_k^{[\ell]}$ be the intermediate solution after solving
$\displaystyle \dv{y^{[\ell]}}{t} = \lambda^{[\ell]}y^{[\ell]}$ at stage $k$.

Therefore, after solving $\displaystyle \dv{y^{[1]}}{t} = \lam[1] y^{[1]}$ at
stage $1$,
\[ 
y_{1}^{[1]} = R_{1}^{[1]} (\alpha_{1}^{[1]} z^{[1]}) y_n,
\]
and after solving $\displaystyle \dv{y^{[N]}}{t} = \lam[N] y^{[N]}$ at
stage $1$,
\[ 
y_{1}^{[N]} = \left( \prod_{\ell=1}^N R_{1}^{[\ell]} (\alpha_{1}^{[\ell]} z^{[\ell]})\right) y_n.
\]
By repeating this process over all operators and stages, we can
write $y_{n+1}$ as
\[
y_{n+1} = y_{s}^{[N]} = \left(\prod_{k=1}^s \prod_{\ell=1}^N  R_{k}^{[\ell]} (\alpha_{k}^{[\ell]} z^{[\ell]})\right) y_n.
\]

\end{proof}

\begin{rmk}
\Cref{thm:stabfunc} is a generalization of simpler, lower-order
results found in~\cite{Hundsdorfer2003, ropp2005, ropp2009}. 
\end{rmk}

\begin{rmk}
\label{rmk:stabfunc}
The FSRK method \eqref{eq:fsrk} can be described using the extended
Butcher tableau \eqref{tab:extended_tab}, which has the structure of
an ARK method. Using example 4 in \cite{sandu2015}, the stability
function can also be written as

\begin{equation}\label{ark_stab_func}
	R(z^{[1]}, z^{[2]}, \dots, z^{[N]})  =  1+\left(\sum\limits_{\ell=1}^N z^{[\ell]} \bb[\ell]\right) \cdot \left(\mathbf{I}_{\mathbb{S}\times \mathbb{S}} - \left(\sum\limits_{\ell=1}^N z^{[\ell]} \mathbf{A}^{[\ell]}\right) \right)^{-1} \cdot \ee_{\mathbb{S}},
\end{equation}
where $\AAA[\ell]$ and $\bb[\ell]$ are as defined in the extended
Butcher tableau in \cref{th:butcher_theorem}, $\ee$ is the vector
of ones, and
$\displaystyle \bbS{}{}=\sum\limits_{k=1}^s \bbS{k}{\Nop} =
\sum\limits_{k=1}^s \sum\limits_{\ell=1}^\Nop
\ts_{k}^{[\ell]}$. After some linear algebra,
	\cref{stabilityofFSRK} can be recovered from
	\cref{ark_stab_func}. Similarly, equation (4.2) in
	\cite{sandu2015} presents the stability function of a GARK
	method using GARK tableau \cref{gark_tab}.
	\Cref{stabilityofFSRK} can be recovered from equation (4.2) in
	\cite{sandu2015} with the GARK tableau obtained by reordering
	\cref{tab:extended_tab_comp}. Both \cref{ark_stab_func} and
	equation (4.2) in \cite{sandu2015} have theoretical
	importance. However, for FSRK methods, the stability function
	\cref{stabilityofFSRK} is more practical in implementation.

\end{rmk}

\begin{rmk}
If we change the order of the sub-integrators, the stability
function of the FSRK method is generally changed, even without
changing the Runge--Kutta methods used for each operator, because
the coefficients $\alpha_{k}^{[\ell]}$ associated with each
sub-integrator are generally changed. This can explain observations
of different stability behaviour of numerical methods depending on
order of sub-integration, e.g.,~\cite{Torabi2014, Ropp2004}. See
also examples below.
\end{rmk}

\begin{rmk}
The choice of test equation~\cref{eq:test} assumes that the
Jacobians of each operator with respect to the solution $\yy$ are
simultaneously diagonalizable in a neighbourhood of the solution. It
is well known that this assumption may not lead to useful practical
analysis. Accordingly, more elaborate test equations
exist~\cite{Gear1974,Kvaerno2000}; however, there is no generally accepted
test equation that is considered standard at this time.
Nonetheless, \cref{eq:test} is often useful in practice and in fact
may be appropriate as a test equation for co-simulation.
\end{rmk}

\section{Numerical Examples}
\label{sec:numerical_ex}

In this section, we illustrate some of the results presented in this
paper, their implications, and how they can be used to explain various
observations and loose ends in the literature. We show how to
construct the Butcher tableau for a general FSRK method with different
RK methods for each operator and each OS stage, how stability depends
on the splitting (the choice of operators, their order of integration,
and sub-integrators). Of particular interest is how backward sub-steps
manifest themselves as holes in the stability region; we describe the
extent to which backward steps may destabilize a computation and how
such destabilization can be mitigated.

\subsection{Construction of the extended Butcher tableau}

\begin{ex}

We first present a simple example to construct a general extended Butcher
tableau. Consider the problem
\begin{equation*} 
	\dv{\yy}{t} = \Fl[1](t,\yy) + \Fl[2](t,\yy) + \Fl[3](t,\yy).
\end{equation*}
We solve the problem using a three-stage, second-order,
3-operator-splitting method OS$_3$(3,2) whose coefficients are given in
\cref{OS32-3coeff}.
\begin{table}[htbp]
	\centering
	\caption{Coefficients $\alpha_k^{[i]}$ for a three-stage,
		second-order, 3-OS method OS$_3$(3,2)}
	\begin{tabular}{|c|c|c|c|}
		\hline 
		$k$ & $\displaystyle \alpha_k^{[1]}$ &  $\displaystyle \alpha_k^{[2]}$  & $\displaystyle \alpha_k^{[3]}$\\
		\hline 
		1 & $1/3$ & $1$ & $1/4$ \\
		\hline 
		2 & $1/3$ & $-1/2$ & $1$\\
		\hline 
		3 & $1/3$ & $1/2$  & $-1/4$ \\
		\hline 
	\end{tabular}
	
	\label{OS32-3coeff}
\end{table}
The first sub-equation
$\displaystyle \dv{\yyy[1]}{t} = \Fl[1](t,\yyy[1])$ is integrated
using the forward Euler (FE), backward Euler (BE), and Heun methods at
stages $k=1,2,3$ respectively. The second sub-equation
$\displaystyle \dv{\yyy[2]}{t} = \Fl[2](t,\yyy[2])$ is integrated
using the Crank--Nicolson, BE, and FE methods at stages $k=1,2,3$
respectively. The third sub-equation
$\displaystyle \dv{\yyy[3]}{t} = \Fl[3](t,\yyy[3])$ is integrated
using the BE, BE, and FE methods at stages $k=1,2,3$ respectively. The
Butcher tableaux of these methods at each stage is given in
\cref{ex_tableaux}.

{\small 
	\begin{equation} \label{ex_tableaux}
		\begin{aligned}
			&\begin{array}{c|c}
				\tcc[1]_1 & \tA[1]_1\\
				\hline 
				& \tbb[1]_1  \\
			\end{array}= 
			\begin{array}{c|c}
				0 & 0\\
				\hline 
				& 1 \\
			\end{array}, 
			&& \begin{array}{c|c}
				\tcc[2]_1 & \tA[2]_1\\
				\hline 
				& \tbb[2]_1  \\
			\end{array}= 
			\begin{array}{c|cc}
				0 & 0 & 0 \\
				1 & 1/2 & 1/2 \\
				\hline 
				& 1/2 & 1/2  \\
			\end{array},    
			&&  \begin{array}{c|c}
				\tcc[3]_1 & \tA[3]_1\\
				\hline 
				& \tbb[3]_1  \\
			\end{array}= 
			\begin{array}{c|c}
				1 & 1\\
				\hline 
				& 1 \\
			\end{array}    \\
			&\begin{array}{c|c}
				\tcc[1]_2 & \tA[1]_2\\
				\hline 
				& \tbb[1]_2  \\
			\end{array}= 
			\begin{array}{c|c}
				1 & 1\\
				\hline 
				& 1 \\
			\end{array},  
			&& \begin{array}{c|c}
				\tcc[2]_2 & \tA[2]_2\\
				\hline 
				& \tbb[2]_2  \\
			\end{array}= 
			\begin{array}{c|c}
				1 & 1\\
				\hline 
				& 1 \\
			\end{array},
			&&  \begin{array}{c|c}
				\tcc[3]_2 & \tA[3]_2\\
				\hline 
				& \tbb[3]_2  \\
			\end{array}= 
			\begin{array}{c|c}
				1 & 1\\
				\hline 
				& 1 \\
			\end{array},    \\
			&\begin{array}{c|c}
				\tcc[1]_3 & \tA[1]_3\\
				\hline 
				& \tbb[1]_3  \\
			\end{array}= 
			\begin{array}{c|cc}
				0 & 0 & 0 \\
				1 & 1 & 0 \\
				\hline 
				& 1/2 & 1/2 \\
			\end{array},  
			&&   \begin{array}{c|c}
				\tcc[2]_3 & \tA[2]_3\\
				\hline 
				& \tbb[2]_3  \\
			\end{array}= 
			\begin{array}{c|c}
				0 & 0\\
				\hline 
				& 1 \\
			\end{array},     
			&&  \begin{array}{c|c}
				\tcc[3]_3 & \tA[3]_3\\
				\hline 
				& \tbb[3]_3  \\
			\end{array}= 
			\begin{array}{c|c}
				0 & 0\\
				\hline 
				& 1 \\
			\end{array}.    \\
		\end{aligned}
\end{equation}}

The extended Butcher tableau consists of three major sections
$\AAA[1]$, $\AAA[2]$, and $\AAA[3]$. Each matrix $\AAA[\ell]$ is of
size $\mathbb{S}\times \mathbb{S}$, where
$\mathbb{S}=\sum\limits_{k=1}^3 \sum\limits_{\ell=1}^3
\tilde{s}_k^{[\ell]} = 11$. In the following tableaux, the blue
numbers correspond to $\alpha_k^{[\ell]}\tA[\ell]_k$.

\begin{equation*}
	\begin{array}{c|c}
		\cc[1] & \AAA[1]\\
		\hline 
		& \bb[1]  \\
	\end{array}= 
	\begin{array}{c|c|cccc:ccc:cccc}
		\osgarkY{1}{1}{1} &0 & \textcolor{blue}{0} & 0 & 0  & 0  &   &   &   &   &   &   &  \\ 
		\osgarkY{1}{2}{1} & 1/3 & 1/3  & \vdots   & \vdots  & \vdots   &    &   &    &   &   &    &   \\ 
		\osgarkY{1}{2}{2} & 1/3 & 1/3 &  \vdots   & \vdots  & \vdots  &    &   &   &   &   &   &    \\ 
		\osgarkY{1}{3}{1} & 1/3 & 1/3 &    \vdots   & \vdots  & \vdots    &    &    &   &   &   &   &   \\ 
		\hdashline
		\osgarkY{2}{1}{1} & 2/3 & 1/3 &  \vdots   & \vdots  & \vdots   & \textcolor{blue}{1/3} & 0 & 0 &   &   &   &  \\ 
		\osgarkY{2}{2}{1} & 2/3 & 1/3 &   \vdots   & \vdots  & \vdots    & 1/3 & \vdots   & \vdots    &    &   &   &   \\ 
		\osgarkY{2}{3}{1} & 2/3 & 1/3 &   \vdots   & \vdots  & \vdots  & 1/3  &  \vdots   & \vdots    &   &   &   &   \\ 
		\hdashline
		\osgarkY{3}{1}{1} & 2/3 & 1/3 &  \vdots   & \vdots  & \vdots  & 1/3  &  \vdots   & \vdots    & \textcolor{blue}{0}  & \textcolor{blue}{0} & 0 & 0 \\ 
		\osgarkY{3}{1}{2} & 1 & 1/3 &   \vdots   & \vdots  & \vdots    & 1/3 &    \vdots   & \vdots & \textcolor{blue}{1/3}  & \textcolor{blue}{0} &   \vdots   & \vdots   \\ 
		\osgarkY{3}{2}{1} & 1 & 1/3 &  \vdots   & \vdots  & \vdots   & 1/3 &   \vdots   & \vdots    & 1/6   & 1/6 &   \vdots   & \vdots     \\ 
		\osgarkY{3}{3}{1} & 1 & 1/3 &  0   & 0 & 0  & 1/3 &  0 & 0  & 1/6 & 1/6  & 0  & 0 \\ 
		\hline 
		& & 1/3  & 0 & 0 & 0 & 1/3 & 0 & 0  & 1/6  & 1/6  & 0 & 0  \\ 
	\end{array}
\end{equation*}

\begin{equation*}
	\begin{array}{c|c}
		\cc[2] & \AAA[2]\\
		\hline 
		& \bb[2]  \\
	\end{array}= 
	\begin{array}{c|c|cccc:ccc:cccc}
		\osgarkY{1}{1}{1} & 0 & 0 & 0& 0  & 0  & &     &  &  &   &   &  \\
		\osgarkY{1}{2}{1} & 0 & \vdots  & \textcolor{blue}{0} & \textcolor{blue}{0} & \vdots   &   &       &   &    &    &     &    \\
		\osgarkY{1}{2}{2} & 1 & \vdots  & \textcolor{blue}{1/2} & \textcolor{blue}{1/2}&  \vdots &  &    &  &   &   &    &    \\
		\osgarkY{1}{3}{1} & 1 & \vdots  & 1/2 & 1/2 & \vdots  &   &     &    &     &    &     &    \\
		\hdashline
		\osgarkY{2}{1}{1} & 1&  \vdots  & 1/2 & 1/2 & \vdots  & 0  & 0  & 0  &   &   &    &   \\
		\osgarkY{2}{2}{1} & 1/2 & \vdots   & 1/2 & 1/2 & \vdots  &  \vdots &  \textcolor{blue}{-1/2} & \vdots &    &    &     &    \\
		\osgarkY{2}{3}{1} & 1/2 &  \vdots & 1/2 & 1/2 & \vdots  &  \vdots &-1/2 & \vdots   &    &    &     &    \\
		\hdashline
		\osgarkY{3}{1}{1} & 1/2 &  \vdots  & 1/2 & 1/2 & \vdots  & \vdots  & -1/2 &   \vdots & 0   & 0   &  0   &  0  \\
		\osgarkY{3}{1}{2} & 1/2 &  \vdots & 1/2 & 1/2 & \vdots  &  \vdots & -1/2 &  \vdots   &    &     &    &   \\
		\osgarkY{3}{2}{1} & 1/2 & \vdots  & 1/2 & 1/2 &  \vdots &  \vdots & -1/2 &  \vdots   &  \vdots &   \vdots  & \textcolor{blue}{0} & \vdots  \\
		\osgarkY{3}{3}{1} & 1 & 0  &      1/2 & 1/2      & 0 & 0 & -1/2 & 0  &  0 &  0  & 1/2 &  0  \\
		\hline 
		& &  0 & 1/2 & 1/2 & 0  & 0 & -1/2& 0 & 0 & 0 & 1/2 & 0 \\ 
	\end{array}
\end{equation*}

\begin{equation*}
	\begin{array}{c|c}
		\cc[3] & \AAA[3]\\
		\hline 
		& \bb[3]  \\
	\end{array}= 
	\begin{array}{c|c|cccc:ccc:cccc}
		\osgarkY{1}{1}{1} &  0 & 0  &  0  & 0  &  0     &   &   &      &   &   &   &     \\
		\osgarkY{1}{2}{1} &  0 & \vdots   & \vdots   & \vdots   & \vdots      &   &   &      &   &   &   &    \\
		\osgarkY{1}{2}{2} &   0 & \vdots   & \vdots   & \vdots   & 0    &   &   &      &   &   &   &    \\
		\osgarkY{1}{3}{1} &   1/4 & \vdots   & \vdots   & \vdots  & \textcolor{blue}{1/4 } &   &    &   &    &    &    &    \\
		\hdashline
		\osgarkY{2}{1}{1} & 1/4 & \vdots   & \vdots   & \vdots &  1/4 &  0 &  0  & 0  &    &    &   &    \\
		\osgarkY{2}{2}{1} &  1/4 & \vdots   & \vdots   & \vdots &  1/4 &  \vdots   & \vdots   & 0 &    &    &   &    \\
		\osgarkY{2}{3}{1} & 5/4 & \vdots   & \vdots   & \vdots &  1/4 & \vdots  & \vdots &\textcolor{blue}{1 }   &   &   &   &    \\
		\hdashline
		\osgarkY{3}{1}{1} & 5/4 & \vdots   & \vdots   & \vdots &  1/4 & \vdots  & \vdots  &  1  & 0   &  0  & 0 &  0  \\
		\osgarkY{3}{1}{2} & 5/4 & \vdots   & \vdots   & \vdots &  1/4 & \vdots  & \vdots  &  1  &  \vdots   & \vdots   & \vdots  & \vdots     \\
		\osgarkY{3}{2}{1} & 5/4 & \vdots   & \vdots   & \vdots &  1/4 & \vdots  & \vdots  &  1  &  \vdots   & \vdots   & \vdots  & 0  \\
		\osgarkY{3}{3}{1} &  5/4 & 0  & 0   & 0 &  1/4 & 0  & 0  &  1   &  0 &  0 & 0  & \textcolor{blue}{ 0} \\
		\hline 
		& & 0  & 0   & 0 &  1/4 & 0  & 0  &  1   &  0 &  0    & 0 & -1/4  \\ 
	\end{array}
\end{equation*}

The compact tableau is given below. The blue numbers correspond to
$\alpha_k^{[\ell]}\tA[\ell]_k$.

\begin{equation*}
	\resizebox{\textwidth}{!}{%
		$
		\begin{array}{ccc|c}
			\cc[1] & \cc[2] & \cc[3] & \mathbf{A} \\
			\hline 
			& & & \mathbf{b}  \\
		\end{array} 	= \begin{array}{ccc|cccc:ccc:cccc}
			0  & 0  & 0   & \textcolor{blue}{ 0} &    &   &       &   &   &      &   &   &   &    \\
			1/3  & 0  & 0    & 1/3   &  \textcolor{blue}{0 }  & \textcolor{blue}{ 0} &       &   &   &    &   &   &   &    \\
			1/3  & 1  &  0  & 1/3  & \textcolor{blue}{1/2 }  & \textcolor{blue}{ 1/2 } &       &   &   &      &   &   &   &    \\
			1/3  & 1  &  1/4  & 1/3  & 1/2  & 1/2  & \textcolor{blue}{1/4 } &  &    &   &    &    &    &    \\
			\hdashline
			2/3  & 1  & 1/4  & 1/3  & 1/2  & 1/2   & 1/4  &  \textcolor{blue}{ 1/3} &    &   &    &    &   &    \\
			2/3  & 1/2  & 1/4  &  1/3  & 1/2  & 1/2   & 1/4 & 1/3   & \textcolor{blue}{-1/2 }   &   &    &    &   &    \\
			2/3  & 1/2  & 5/4  &  1/3  & 1/2  & 1/2   & 1/4 & 1/3    &  -1/2   &\textcolor{blue}{ 1 }   &   &   &   &    \\
			\hdashline
			2/3 & 1/2  & 5/4  &   1/3  & 1/2  & 1/2   & 1/4 & 1/3    &  -1/2   &  1   &  \textcolor{blue}{0 }  &  \textcolor{blue}{ 0}  &   &    \\
			1 & 1/2  & 5/4   &    1/3  & 1/2  & 1/2   & 1/4 & 1/3    &  -1/2   &  1    &  \textcolor{blue}{1/3 }  &   \textcolor{blue}{0 }  &   &    \\
			1 & 1/2  & 5/4   &   1/3  & 1/2  & 1/2   & 1/4 & 1/3    &  -1/2   &  1   &   1/6  &   1/6   &  \textcolor{blue}{0 } &   \\
			1 & 1  &  5/4  &    1/3  & 1/2  & 1/2   & 1/4 & 1/3    &  -1/2   &  1   &   1/6  &   1/6    &  1/2  & \textcolor{blue}{0 } \\
			\hline 
			& & &    1/3  & 1/2  & 1/2   & 1/4 & 1/3    &  -1/2   &  1   &   1/6  &   1/6    &  1/2   &  -1/4 \\ 
		\end{array} $
	}
\end{equation*}

\end{ex}

\subsection{Construction of the stability function and linear
stability analysis}

The next example demonstrates how to construct the stability function
from~\cref{thm:stabfunc} and how linear stability analysis can be used
to understand some observed stability behaviour when an ODE is solved
via an FSRK method. This example also illustrates the how the
stability behaviour can depend on the splitting.

\begin{ex} \label{ex:linear_stab_ex}
Consider the differential equation 
\begin{equation} \label{eq:stability_ex}
	\dv{y}{t} = -20y = \lambda^{[1]} y + \lambda^{[2]} y, \enskip y(0) =  1. 
\end{equation} 
We apply the second-order Strang--Marchuk splitting method to solve
\cref{eq:stability_ex}, where the first sub-equation
$\displaystyle \dv{y^{[1]}}{t} = \lambda^{[1]} y
^{[1]} $ is solved using Heun's method, and the second
sub-equation $\displaystyle \dv{y^{[2]}}{t} = \lambda^{[2]} y
^{[2]} $ is solved
using the two-stage, second-order, L-stable singly diagonally implicit
Runge--Kutta method (SDIRK(2,2)).

The stability function for Heun's method is 
\begin{equation*} \label{eq:heun_stab_func}
	R_{\text{Heun}}(z) = 1+ z + \frac{z^2}{2}.
\end{equation*}

The stability function for the SDIRK(2,2) method is 

\begin{equation*} \label{eq:sdirk2o2_stab_func}
	R_{\text{SDIRK(2,2)}}(z) = \frac{z - 2\gamma z +1 }{(\gamma z
		-1)^2}, \quad \gamma = \frac{2-\sqrt{2}}{2}.
\end{equation*}

The stability function for the described FSRK method is 
\begin{equation*} 
	R(z^{[1]}, z^{[2]}) = \left[R_{\text{Heun}}\left(\frac{1}{2}z^{[1]}\right)\right]^2 R_{\text{SDIRK(2,2)}}(z^{[2]}).
\end{equation*}

We now consider three different splittings: 

\begin{itemize}
	\item Case 1 (50-50 split): $\lambda^{[1]} = -10$ and
	$\lambda^{[2]} = -10$. In this case, $z^{[1]} = z^{[2]} =
	-10\Dt$. Let $z = -\Dt$, $z^{[1]} = z^{[2]} = 10z$. The stability
	region is given by
	
	\begin{equation*}\label{eq:strang_heun_sdirk2o2_stab_func1}
		R_{\text{50-50}}(z) = \frac{(25z^2/2 + 5z + 1)^2(10z - 20\gamma z + 1)}{(10\gamma z - 1)^2}
	\end{equation*}
	

	\item Case 2 (10-90 split): $\lambda^{[1]} = -2$ and
	$\lambda^{[2]} = -18$.  Let $z = -\Dt$,
	$z^{[1]} =2z, z^{[2]} = 18z$.  The stability region is given by
	
	\begin{equation*}\label{eq:strang_heun_sdirk2o2_stab_func2}
		R_{\text{10-90}}(z) = \frac{(z^2/2 + z + 1)^2(18z - 36\gamma z + 1)}{(18\gamma z - 1)^2}
	\end{equation*}
	

	\item Case 3 (90-10 split): $\lambda^{[1]} = -18$ and
	$\lambda^{[2]} = -2$.  Let $z = -\Dt$, $z^{[1]} =18z, z^{[2]} = 2z$.
	The stability region is given by
	
	\begin{equation*}\label{eq:strang_heun_sdirk2o2_stab_func3}
		R_{\text{90-10}}(z) = \frac{(81/2 \, z^2 + 9z + 1)^2(2z - 4\gamma z + 1)}{(2\gamma z - 1)^2}
	\end{equation*}


	\begin{figure}
		\centering
		\includegraphics[width=5in]{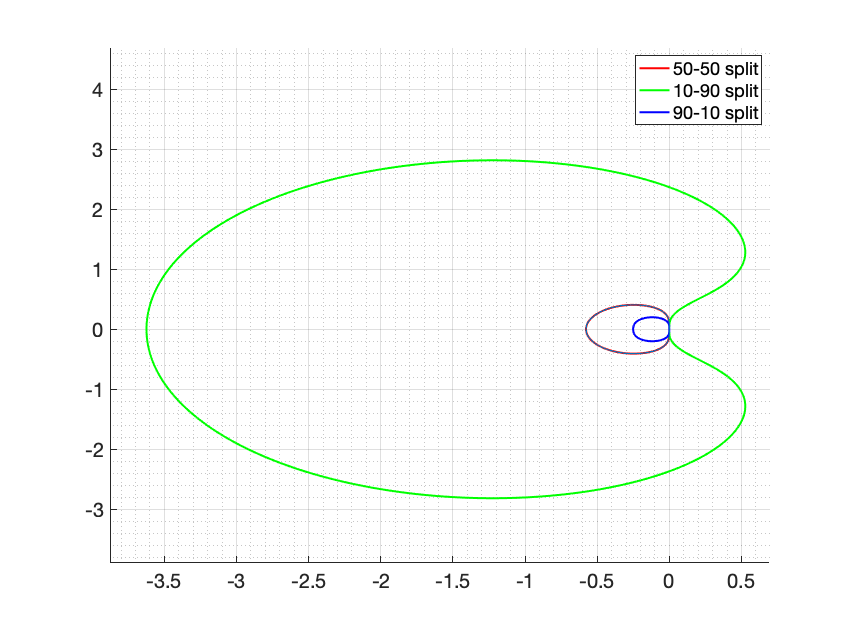}
		\caption{The interior region of each curve is the stability region
			for the Strang--Marchuk splitting method applied with the Heun
			and SDIRK(2,2) methods.}
		\label{fig:stability_linear_ex}
	\end{figure}

\end{itemize}

The stability regions $|R_{\text{50-50}}(z)| < 1$,
$|R_{\text{10-90}}(z)| < 1$, and $|R_{\text{90-10}}(z)| < 1$ are the
interior regions of the curves in \cref{fig:stability_linear_ex}. 
The figure confirms the common expectation that the stability of a
splitting method is improved when it is possible to treat the stiff
part of an ODE with an L-stable method.
\end{ex}

\subsection{The Brusselator problem}
\begin{ex}
In \cite{ropp2005}, the instability of the Brusselator problem is
explored when solved using the second-order Strang
operator-splitting method with the trapezoidal rule for the
diffusion term and CVODE~\cite{Hindmarsh2005} for the reaction term. To
analyze the stability in the language of FSRK, we recreate the
instability observed in~\cite{ropp2005} using the Strang
operator-splitting method with Heun's method as sub-integrators and
explain it using the stability function established in
\cref{thm:stabfunc}.

The Brusselator problem is defined as follows
\begin{subequations} \label{eq:brusselator}
	\begin{align}
		\pdv{T}{t} &= D_1 \pdv[2]{T}{x} + \alpha - (\beta+1)T + T^2C, \\
		\pdv{C}{t} &= D_2 \pdv[2]{C}{x} + \beta T -T^2C, 
	\end{align}
\end{subequations}
where $T$ and $C$ represent concentrations of different chemical
species. In \cite{ropp2005}, the authors considered parameter values
of $\alpha = 0.6$, $\beta =2$, and
$\displaystyle D_1= D_2 = \frac{1}{40}$, with boundary conditions
$T(0,t) = T(1,t) = \alpha$ and
$\displaystyle C(0,t) = C(1,t) = \frac{\beta}{\alpha}$ and initial
conditions $T(x,0) = \alpha + x(1-x)$ and
$\displaystyle C(x,0) = \frac{\beta}{\alpha} + x^2(1-x)$. 
\Cref{eq:brusselator} is split according to diffusion and reaction
as
\begin{equation*}
	\begin{aligned}
		\pdv{T^{[1]}}{t} &= D_1 \pdv[2]{T^{[1]}}{x}, \\
		\pdv{C^{[1]}}{t} &= D_2 \pdv[2]{C ^{[1]}}{x} 
	\end{aligned}
\end{equation*}
and 
\begin{equation*}
	\begin{aligned}
		\pdv{T^{[2]}}{t} &= \alpha - (\beta+1)T^{[2]} + (T^{[2]})^2C^{[2]},  \\
		\pdv{C^{[2]}}{t} &= \beta T^{[2]} -(T^{[2]})^2C^{[2]}.
	\end{aligned}
\end{equation*}

A reference solution for $t\in [0,80]$ is computed using the MATLAB
parabolic and elliptic PDE solver {\tt pdepe}. We decreased the
spatial meshsize $\Delta x$ and adjusted the absolute and relative
tolerances for the solver until there were at least $6$ matching
digits between successive approximations at 32,000 and 800 uniformly
distributed points in space and time, respectively.

For our experiments, the spatial derivatives are discretized using
central finite differences on a uniform grid on the interval
$x\in [0,1]$. 
The ensuing method-of-lines ODEs are then solved using the Strang
operator-splitting method with Heun's method applied to both the
reaction and diffusion systems.
%
%
%
%
%
The unstable behavior is depicted in \cref{fig:unstableStrang}. 

\begin{figure}[htbp] 
	\centering
	\includegraphics[width = \textwidth]{./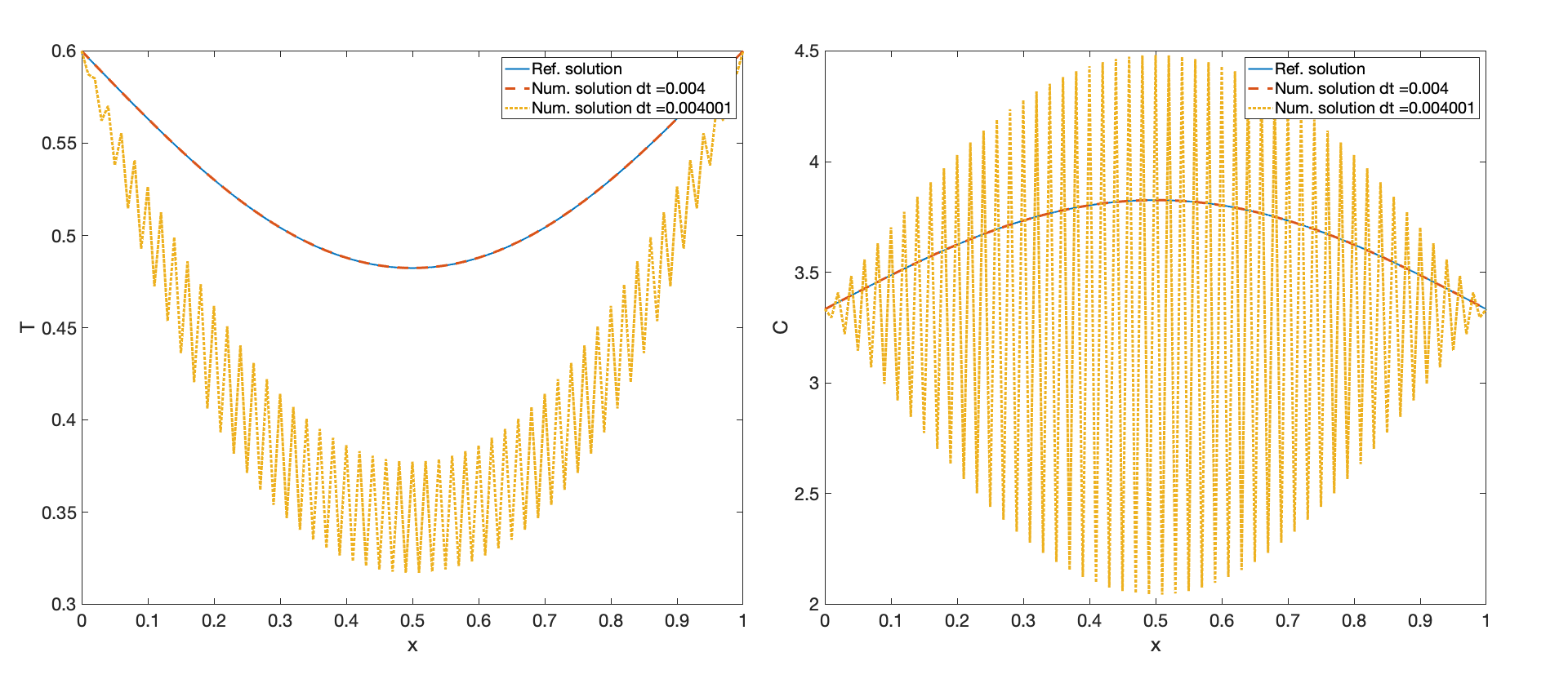}
	\caption{Solution at $t=80$ using the Strang (Heun+Heun)
			method with $\Dt = 0.004$ and $\Dt = 0.004001$ and
			$\Dx = 0.01$. We note that when solved with $\Dt = 0.004$, the
			instability that occurs for $\Dt = 0.004001$ is resolved. }
	\label{fig:unstableStrang}
\end{figure}
To understand this unstable behavior, we consider the stability
function of the Strang (Heun+Heun) method using \cref{thm:stabfunc}:
\begin{equation} \label{eq:strang_heun_stab_func}
	\begin{aligned}
		R(z^{[1]}, z^{[2]}) & = \left[R_{\text{Heun}} \left(\frac{1}{2}z^{[1]} \right) \right]^2R_{\text{Heun}} (z^{[2]})  \\	&  = \left(1+\frac{1}{2}z^{[1]}+\frac{1}{8}(z^{[1]})^2 \right)^2\left( 1+z^{[2]}+\frac{1}{2}(z^{[2]})^2 \right),
	\end{aligned}
\end{equation}
where $z^{[1]} = \lambda^{[1]} \Dt$ and $z^{[2]} = \lambda^{[2]}
\Dt$. We compute the eigenvalues of the Jacobian matrices of the
diffusion and reaction system. The Jacobian matrix of the diffusion
system is
\[
J_{\text{Diffusion}}= \begin{bmatrix}
	D_1 M & 0 \\
	0 & D_2 M \\
\end{bmatrix},
\] 
where $\displaystyle M = \frac{1}{\Dx^2}\begin{bmatrix}
	0 & \cdots & \cdots & \cdots & 0  \\
	1 & -2 & 1 &  &  \\
	& \ddots & \ddots & \ddots &  \\
	&  & 1 & -2 & 1 \\
	0 & \cdots & \cdots & \cdots & 0  \\
\end{bmatrix}$
. The Jacobian matrix of the reaction system is
\[
J_{\text{Reaction}}= 
\left[
\begin{array}{ccc:ccc}
	0 & \cdots & 0  & 0 & \cdots & 0 \\
	&-(\beta+1)+2T_iC_i & & & T_i^2 & \\
	0 & \cdots & 0  & 0 & \cdots & 0 \\
	\hdashline 
	0 & \cdots & 0  & 0 & \cdots & 0 \\
	& \beta-2T_iC_i & & & -T_i^2 & \\
	0 & \cdots & 0  & 0 & \cdots & 0 \\
\end{array} 
\right]
\]
A plot of the eigenvalues for $t\in[0,80]$ and $\Dx = 0.01$ is shown
in \cref{fig:eigenplotNx101}.
\begin{figure}[htbp] 
	\centering
	\includegraphics[width = \textwidth]{./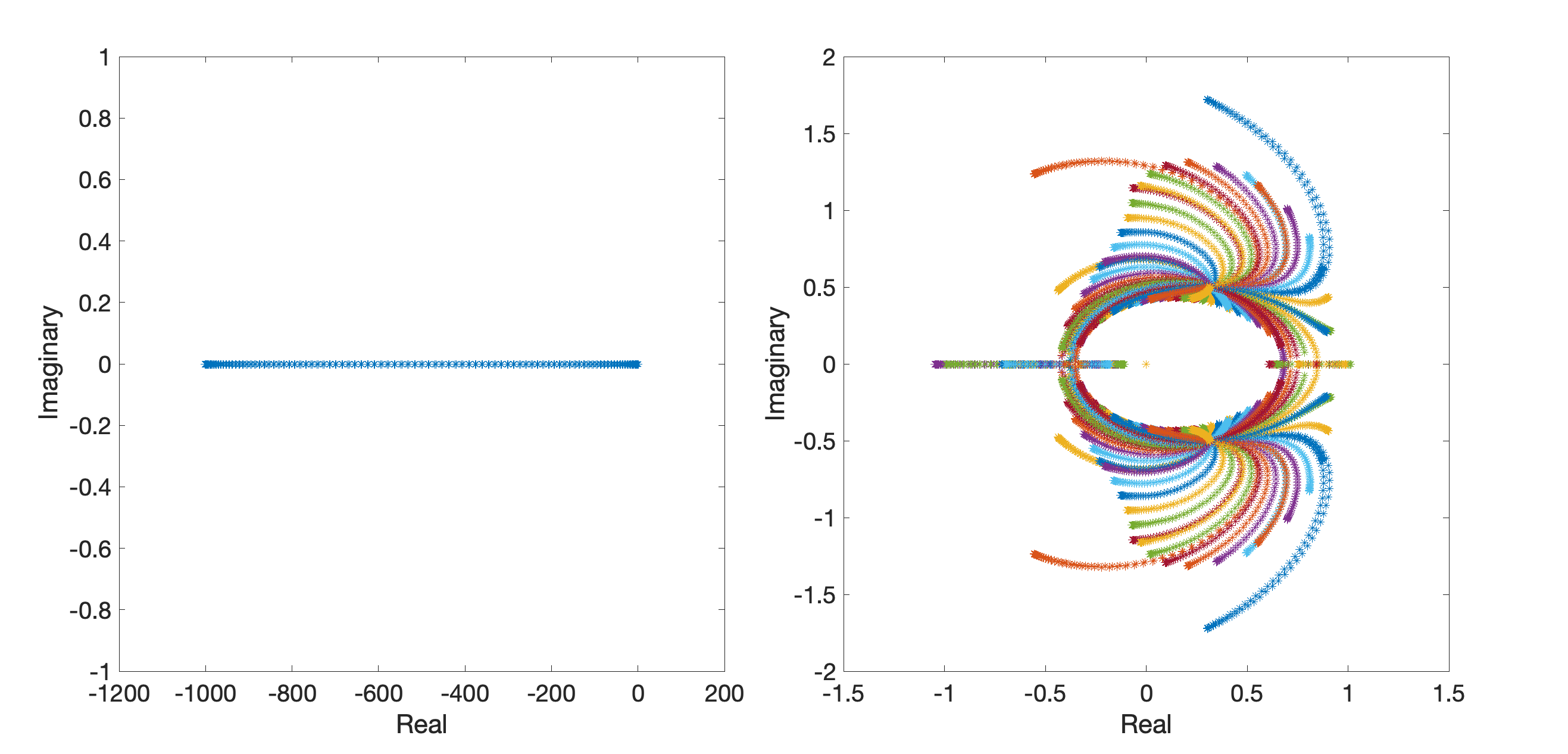}
	\caption{Eigenvalue of the Jacobian matrices $J_{\text{Diffusion}}$ and $J_{\text{Reaction}}$. }
	\label{fig:eigenplotNx101}
\end{figure}

Based on the distribution of the eigenvalues and the general shape of
the stability region of the Strang (Heun+Heun) OS method, we choose
$\lambda^{[1]} = -1000.75$, which is the most negative eigenvalue of
the diffusion system, and $\lambda^{[2]} = -1.047$, which is the
eigenvalue with the most negative real component of the reaction
system. Because the ratio of these two extreme eigenvalues is
approximately 1000, we let 
$z^{[2]} = 0.001z^{[1]}$. The stability function
\cref{eq:strang_heun_stab_func} can be written as
\begin{equation*}
	R(z) = \left(1+\frac{1}{2}z + \frac{1}{8}z^2 \right)^2 \left(1+0.001z + \frac{1}{2}(0.001z)^2 \right).
\end{equation*}
Based on this stability region, we estimate the largest $\Dt$ that
produces a stable solution with the Strang (Heun+Heun) method is
$\Delta t = 0.004$, agreeing with numerical experiments as shown in \cref{fig:unstableStrang}.



Linear stability regions cannot generally be expected to accurately
predict the step-size restriction for stability. However, they can be
used to qualitatively compare different FSRK methods. For example,
\cite{ropp2005} reported that integrating the diffusion operator with
an L-stable RK method can better control high wave-number
instability. Our analysis does not directly apply to this
	situation because CVODE was used as the sub-integrator for the
	reaction operator. However, if the reaction operator is treated with
	an RK method, the stability regions for FSRK methods can offer
	insight into this observation, as we now discuss.

We consider a family of SDIRK methods with the following Butcher tableau:
\begin{equation}\label{eq:sdirk}
	\begin{array}{c|cc}
		\gamma & \gamma & 0  \\
		1-\gamma & 1-2\gamma & \gamma \\ \hline 
		& 1/2 & 1/2
	\end{array}, 
\end{equation}
where $\gamma$ is a free parameter.  When $\gamma = 1/2$, the
resulting SDIRK method is an A-stable, second-order accurate
method. When $\gamma = 1+1/\sqrt{2}$, the resulting SDIRK method is an
L-stable, second-order method. We solve the Brusselator problem
\cref{eq:brusselator} again using the Strang splitting method. The
reaction operator is solved with Heun's method, and the diffusion
operator is solved in two different ways: once with the A-stable SDIRK
method ($\gamma = 1/2$) and then with the L-stable SDIRK method
($\gamma = 1+1/\sqrt{2}$). \Cref{fig:instability_plot2} confirms that
using an L-stable method with the step-size $\Dt=0.02$ improves the
stability of the solution.
For the parameter values used, the stability region for the FSRK
method that uses the A-stable SDIRK method has a negative real
intercept of $z\approx -2008$, whereas it is easy to show,
	e.g., using~\cref{thm:stabfunc} or \cref{rmk:stabfunc}, that the FSRK
	method that uses the L-stable SDIRK method is in fact A-stable
(despite the use of an ERK method as a sub-integrator).




\begin{figure}[htbp]
	\centering	\includegraphics[width=5in]{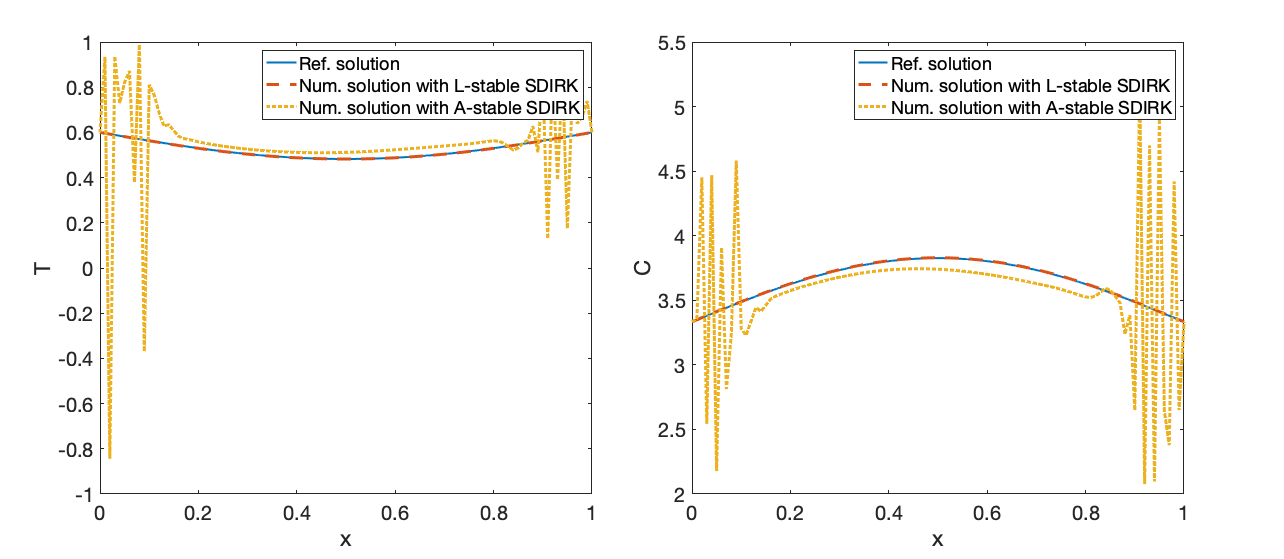}
	\caption{Solutions at $t=80$ using an A-stable SDIRK method ($\gamma=1/2$) and an L-stable SDIRK method ($\gamma = 1+1/\sqrt{2}$) with $\Dt = 0.2$. }
	\label{fig:instability_plot2}
\end{figure}


\end{ex}

\subsection{Stability regions of FSRK with negative coefficients}

OS methods of order three or higher require
backward-in-time sub-steps in each operator during the
integration~\cite{goldman1996}. There is the potential for backward
steps to create a hole in the stability region and undermine the
stability of the method in practice. We give an example of this
phenomenon in \cref{ex:hole_in_stab_region}.


\begin{ex}\label{ex:hole_in_stab_region}

Consider the differential equation
\begin{equation} \label{eq:ex_hole}
	\dv{y}{t} =  \lam[1] y+  \lam[2] y, \enskip y(0) = y_0.
\end{equation} 
Suppose we solve the ODE \cref{eq:ex_hole} using the third-order
accurate Ruth operator-splitting method whose coefficients are given
in \cref{tab:Ruthcoeff}.
\begin{table}[htbp]
	\centering
	\caption{Coefficients $\alpha_k^{[i]}$ for the Ruth method}
	\begin{tabular}{|c|c|c|}
		\hline 
		$k$ & $\displaystyle \alpha_k^{[1]}$ &  $\displaystyle \alpha_k^{[2]}$  \\
		\hline 
		1 & $7/24$  & $2/3$ \\
		\hline 
		2 & $3/4$  & $-2/3$\\
		\hline 
		3 & $-1/24$  & $1$ \\
		\hline 
	\end{tabular}
	\label{tab:Ruthcoeff}
\end{table}
The first operator $\displaystyle \dv{y}{t}^{[1]} = \lam[1]y^{[1]}$
is solved with the three-stage, third-order explicit Runge--Kutta
method due to Kutta (RK3), and the second operator
$\displaystyle \dv{y}{t}^{[2]} = \lam[2]y^{[2]}$ is solved with
SDIRK(2,3) from \cref{eq:sdirk} with $\gamma = (3+\sqrt{3})/6$. In
the case where $\lam[1] = \lam[2]$, the stability function for
$z = \lam[1] \Dt = \lam[2]\Dt$ is
\begin{equation}\label{eq:stab_func_neg_coeff}
	\begin{aligned}
		R(z)  &= R_{\text{RK3}} \left(\frac{7}{24}z \right) R_{\text{RK3}} \left(\frac{3}{4}z \right) R_{\text{RK3}} \left(-\frac{1}{24}z \right)  \cdot \\
		& \qquad R_{SDIRK(2,3)} \left(\frac{2}{3}z \right)
		R_{SDIRK(2,3)} \left(-\frac{2}{3}z \right) R_{SDIRK(2,3)}
		\left(1z \right).
	\end{aligned}
\end{equation}
The stability function for the SDIRK(2,3) method is 
\[
R_{SDIRK(2,3)}(z) = 1 - \frac{z^2(2\gamma - 1)}{2(\gamma z-1)^2} - \frac{z}{\gamma z -1},
\]
from which we see there is a singularity
in~\cref{eq:stab_func_neg_coeff} at $z = 1/(\alpha_k^{[2]}
\gamma)$. Such singularities are located in the right-half of the
complex plane when $\alpha_k^{[2]}> 0$. When $\alpha_k^{[2]}< 0$,
however, the singularity is located in the left-half of the complex
plane. In particular, for $\alpha_k^{[\ell]}= -2/3$, the singularity
is at $ z \approx -1.9$ and results in a hole in the main stability
region as shown in \cref{fig:hole_in_stab_region}. In practice, an
unfortunate combination of \textit{any} eigenvalue $\lambda$ and
$\Dt$ such that $ z = \lambda \Dt \approx -1.9$ would lead to an
unstable step and may explain why negative steps have been generally
eschewed in practice for non-reversible
problems~\cite{Sornborger1999}.  To mitigate this behavior, one
could use the implicit method on operators with small negative
coefficients $\alpha_k^{[\ell]}$. When $\alpha_k^{[\ell]}$ is
sufficiently small, the singularity would be located outside of the
stability region. For example, when SDIRK(2,3) is applied to the
first operator 
and the RK3 is applied to the second operator, the singularity in
the left-half of the complex plane is located near $z = -30.43$,
which is outside the stability region defined
by~\cref{eq:stab_func_neg_coeff} with subscripts RK3 and SDIRK(2,3)
interchanged.


\begin{figure}
	\centering
	\includegraphics[width=5in]{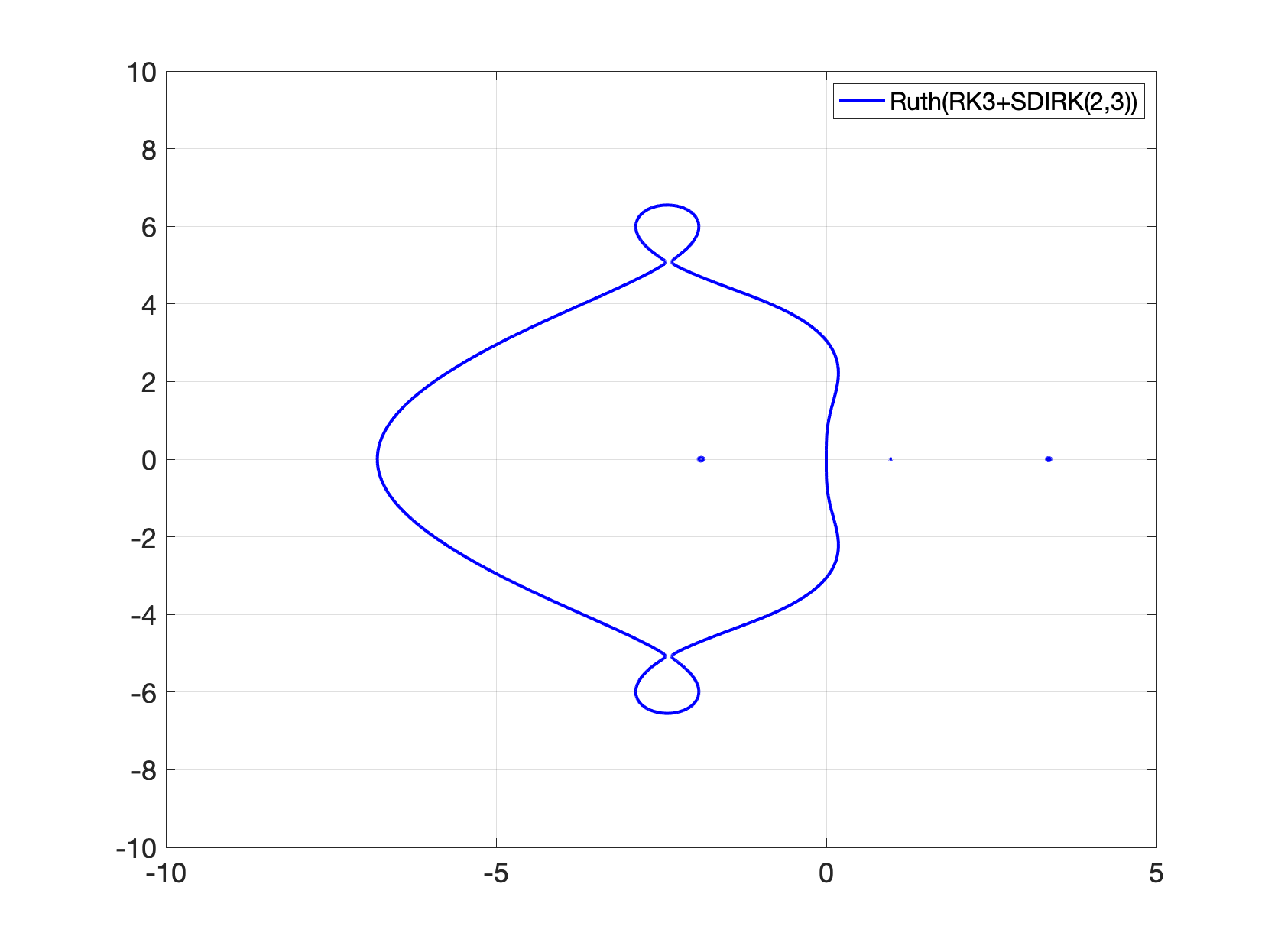}
	\caption{The stability region for the Ruth (RK3+SDIRK2O3)
		method is the interior of the large contour excluding the
		hole near $z = -1.9$. }
	\label{fig:hole_in_stab_region}
\end{figure}
\end{ex}

\section{Conclusions and future work}
\label{sec:conclusions}

We have shown how FSRK methods can be systematically represented using
Butcher tableaux within the framework of GARK methods. This
representation allows us to immediately study their stability
properties using an established framework and has further allowed us
to provide an informative interpretation of the stability function of
an FSRK method in terms of the splitting coefficients, the choice of
ordering of the operators, and the underlying RK
sub-integrators. These tools enable a systematic explanation and
understanding of common observations of FSRK methods in the literature
that have hitherto only been given as special cases. In particular, we
are able to more clearly understand the role of negative splitting
coefficients in the overall stability of an FSRK method.  The analysis
presented in this paper also provides a unified means to
develop new OS methods favourable properties. The development of such
methods is the subject of future work.

\section{CRediT author statement}


{\bf Raymond J. Spiteri:} Conceptualization, Methodology, Formal Analysis, Resources, Writing-Original Draft, Writing-Reveiw \& Editing, Supervision, Project administration, Funding acquisition 

{\bf Siqi Wei:} Methodology, Software, Validation, Formal analysis, Investigation, Data Curation, Visualization, Writing-Original Draft, Writing-Reveiw \& Editing

\newpage 

\bibliographystyle{unsrtnat}
\bibliography{FSRK_arxiv}

\begin{thebibliography}{26}
\providecommand{\natexlab}[1]{#1}
\providecommand{\url}[1]{\texttt{#1}}
\expandafter\ifx\csname urlstyle\endcsname\relax
  \providecommand{\doi}[1]{doi: #1}\else
  \providecommand{\doi}{doi: \begingroup \urlstyle{rm}\Url}\fi

\bibitem[Gomes et~al.(2018)Gomes, Thule, Broman, Larsen, and
  Vangheluwe]{Gomes2018-coSimulation}
Cl\'{a}udio Gomes, Casper Thule, David Broman, Peter~Gorm Larsen, and Hans
  Vangheluwe.
\newblock Co-simulation: A survey.
\newblock \emph{ACM Comput. Surv.}, 51\penalty0 (3), may 2018.
\newblock ISSN 0360-0300.
\newblock \doi{10.1145/3179993}.
\newblock URL \url{https://doi.org/10.1145/3179993}.

\bibitem[Lie and Engel(1970)]{Lie1888}
S.~Lie and F.~Engel.
\newblock \emph{Theorie der transformationsgruppen (Vol I)}.
\newblock American Society, Providence, 1970.

\bibitem[Hundsdorfer and Verwer(2003)]{Hundsdorfer2003}
W.~Hundsdorfer and J.~G. Verwer.
\newblock \emph{Numerical solution of time-dependent
  advection-diffusion-reaction equations}, volume~33.
\newblock Springer-Verlag, Berlin, 2003.

\bibitem[McLachlan and Quispel(2002)]{McLachlan2002}
R.~I. McLachlan and G.~R.~W. Quispel.
\newblock Splitting methods.
\newblock \emph{Acta Numerica}, 11:\penalty0 341--434, 2002.

\bibitem[Glowinski et~al.(2017)Glowinski, Osher, and Yin]{Glowinski2017}
R.~Glowinski, S.~J. Osher, and W.~Yin.
\newblock \emph{{Splitting Methods in Communication, Imaging, Science, and
  Engineering}}.
\newblock Springer, 2017.

\bibitem[Guermond et~al.(2006)Guermond, Minev, and Shen]{Guermond2006}
J.~L. Guermond, P.~Minev, and J.~Shen.
\newblock An overview of projection methods for incompressible flows.
\newblock \emph{Comput. Methods Appl. Mech. Engrg.}, 195\penalty0
  (44-47):\penalty0 6011--6045, 2006.
\newblock ISSN 0045-7825.
\newblock URL \url{https://doi.org/10.1016/j.cma.2005.10.010}.

\bibitem[Yanenko(1971)]{Yanenko1971}
N.~N. Yanenko.
\newblock \emph{The method of fractional steps. {T}he solution of problems of
  mathematical physics in several variables}.
\newblock Springer-Verlag, New York-Heidelberg, 1971.
\newblock Translated from the Russian by T. Cheron. English translation edited
  by M. Holt.

\bibitem[Trotter(1958)]{Trotter1958}
H.~F. Trotter.
\newblock Approximation of semi-groups of operators.
\newblock \emph{Pacific Journal of Mathematics}, 8\penalty0 (4):\penalty0
  887--919, 1958.

\bibitem[Godunov(1959)]{Godunov1959}
S.~K. Godunov.
\newblock A difference method for numerical calculation of discontinuous
  solutions of the equations of hydrodynamics.
\newblock \emph{Matematicheskii Sbornik}, 89\penalty0 (3):\penalty0 271--306,
  1959.

\bibitem[Strang(1968)]{Strang1968}
G.~Strang.
\newblock On the construction and comparison of difference schemes.
\newblock \emph{SIAM Journal on Numerical Analysis}, 5\penalty0 (3):\penalty0
  506--517, 1968.

\bibitem[Marchuk(1971)]{Marchuk1971}
G.~I. Marchuk.
\newblock On the theory of the splitting-up method.
\newblock In \emph{Numerical Solution of Partial Differential Equations-II},
  pages 469 -- 500. Academic Press, 1971.

\bibitem[Ropp et~al.(2004)Ropp, Shadid, and Ober]{Ropp2004}
David~L. Ropp, John~N. Shadid, and Curtis~C. Ober.
\newblock Studies of the accuracy of time integration methods for
  reaction-diffusion equations.
\newblock \emph{J. Comput. Phys.}, 194\penalty0 (2):\penalty0 544--574, 2004.
\newblock ISSN 0021-9991.
\newblock \doi{10.1016/j.jcp.2003.08.033}.
\newblock URL \url{https://doi-org.cyber.usask.ca/10.1016/j.jcp.2003.08.033}.

\bibitem[Christlieb et~al.(2015)Christlieb, Liu, and Xu]{Christlieb2015}
A.~J Christlieb, Y.~Liu, and Z.~Xu.
\newblock High order operator splitting methods based on an integral deferred
  correction framework.
\newblock \emph{Journal of Computational Physics}, 294:\penalty0 224--242,
  2015.

\bibitem[Gonz\'{a}lez-Pinto et~al.(2022)Gonz\'{a}lez-Pinto,
  Hern\'{a}ndez-Abreu, P\'{e}rez-Rodr\'{\i}guez, Sarshar, Roberts, and
  Sandu]{Gonzalez-Pinto2021}
Severiano Gonz\'{a}lez-Pinto, Domingo Hern\'{a}ndez-Abreu, Maria~S.
  P\'{e}rez-Rodr\'{\i}guez, Arash Sarshar, Steven Roberts, and Adrian Sandu.
\newblock A unified formulation of splitting-based implicit time integration
  schemes.
\newblock \emph{J. Comput. Phys.}, 448:\penalty0 Paper No. 110766, 22, 2022.
\newblock ISSN 0021-9991.
\newblock \doi{10.1016/j.jcp.2021.110766}.
\newblock URL \url{https://doi-org.cyber.usask.ca/10.1016/j.jcp.2021.110766}.

\bibitem[Speth et~al.(2013)Speth, Green, MacNamara, and Strang]{Speth2013}
Raymond~L. Speth, William~H. Green, Shev MacNamara, and Gilbert Strang.
\newblock Balanced splitting and rebalanced splitting.
\newblock \emph{SIAM J. Numer. Anal.}, 51\penalty0 (6):\penalty0 3084--3105,
  2013.
\newblock ISSN 0036-1429.
\newblock \doi{10.1137/120878641}.
\newblock URL \url{https://doi-org.cyber.usask.ca/10.1137/120878641}.

\bibitem[Cooper and Sayfy(1980)]{Cooper1980}
G.~J. Cooper and A.~Sayfy.
\newblock Additive methods for the numerical solution of ordinary differential
  equations.
\newblock \emph{Math. Comp.}, 35\penalty0 (152):\penalty0 1159--1172, 1980.
\newblock ISSN 0025-5718.
\newblock \doi{10.2307/2006380}.
\newblock URL \url{https://doi-org.cyber.usask.ca/10.2307/2006380}.

\bibitem[Sandu and G\"{u}nther(2015)]{sandu2015}
A.~Sandu and M.~G\"{u}nther.
\newblock A generalized-structure approach to additive runge--kutta methods.
\newblock \emph{SIAM Journal on Numerical Analysis}, 53\penalty0 (1):\penalty0
  17--42, 2015.

\bibitem[Hairer et~al.(2006)Hairer, Lubich, and Wanner]{hairer2006}
E.~Hairer, C.~Lubich, and G.~Wanner.
\newblock \emph{Geometric numerical integration: structure-preserving
  algorithms for ordinary differential equations}, volume~31.
\newblock Springer Science \& Business Media, 2006.

\bibitem[Kennedy and Carpenter(2003)]{kennedy2003}
C.~A. Kennedy and M.~H. Carpenter.
\newblock {Additive Runge--Kutta schemes for convection--diffusion--reaction
  equations}.
\newblock \emph{Applied Numerical Mathematics}, 44\penalty0 (1-2):\penalty0
  139--181, 2003.

\bibitem[Ropp and Shadid(2005)]{ropp2005}
D.~L. Ropp and J.~N. Shadid.
\newblock Stability of operator splitting methods for systems with indefinite
  operators: reaction-diffusion systems.
\newblock \emph{Journal of Computational Physics}, 203\penalty0 (2):\penalty0
  449--466, 2005.

\bibitem[Ropp and Shadid(2009)]{ropp2009}
D.~L. Ropp and J.~N. Shadid.
\newblock Stability of operator splitting methods for systems with indefinite
  operators: Advection--diffusion--reaction systems.
\newblock \emph{Journal of Computational Physics}, 228\penalty0 (9):\penalty0
  3508--3516, 2009.

\bibitem[Torabi~Ziaratgahi et~al.(2014)Torabi~Ziaratgahi, Marsh, Sundnes, and
  Spiteri]{Torabi2014}
Saeed Torabi~Ziaratgahi, Megan~E. Marsh, Joakim Sundnes, and Raymond~J.
  Spiteri.
\newblock Stable time integration suppresses unphysical oscillations in the
  bidomain model.
\newblock \emph{Frontiers in Physics}, 2, 2014.
\newblock ISSN 2296-424X.
\newblock \doi{10.3389/fphy.2014.00040}.
\newblock URL
  \url{https://www.frontiersin.org/article/10.3389/fphy.2014.00040}.

\bibitem[Gear(1974)]{Gear1974}
Charles~William Gear.
\newblock Multirate methods for ordinary differential equations.
\newblock Technical report, Department of Computer Science, Illinois
  University, Urbana (USA), 1974.

\bibitem[Kv{\ae}rn\o(2000)]{Kvaerno2000}
Anne Kv{\ae}rn\o.
\newblock Stability of multirate {R}unge--{K}utta schemes.
\newblock \emph{Int. J. Differ. Equ. Appl.}, 1A\penalty0 (1):\penalty0 97--105,
  2000.
\newblock ISSN 1311-2872.
\newblock Tenth International Colloquium on Differential Equations (Plovdiv,
  1999).

\bibitem[Goldman and Kaper(1996)]{goldman1996}
G.~Goldman and T.~J. Kaper.
\newblock {N}th-order operator splitting schemes and nonreversible systems.
\newblock \emph{SIAM Journal on Numerical Analysis}, 33\penalty0 (1):\penalty0
  349--367, 1996.

\bibitem[Sornborger and Stewart(1999)]{Sornborger1999}
A.~T. Sornborger and E.~D. Stewart.
\newblock Higher-order methods for simulations on quantum computers.
\newblock \emph{Physical Review A}, 60\penalty0 (3):\penalty0 765--789, 1999.

\end{thebibliography}

\end{document}